\newcommand{\R}{\mathbb{R}}
\newcommand{\eps}{\varepsilon}
\newcommand{\egf}[1]{\exp\left(\e^{#1} - 1\right)}
\DeclareMathOperator\Log{Log}
\DeclareMathOperator\erfc{erfc}
\DeclareMathOperator{\e}{e}
\newtheorem{prop}{Proposition}[section]
\newtheorem{lemma}[prop]{Lemma}
\newtheorem{corollary}[prop]{Corollary}
\newtheorem{theorem}[prop]{Theorem}
\newtheorem{remark}[prop]{Remark}
\newtheorem{fact}[prop]{Proposition}
\def\({\left(}
\def\){\right)}
\def\[{\left[}
\def\]{\right]}
\numberwithin{equation}{section}
\begin{document}

\title{
\huge
Explicit bounds for Bell numbers \\and their ratios
} 

\author{
Jerzy Grunwald\thanks{Faculty of Pure and Applied Mathematics, Wroc{\l}aw University of Science and Technology, Ul. Wybrze\.ze Wyspia\'nskiego 27, Wroc{\l}aw, Poland.
e-mail: {\tt grunwaldjerzy@gmail.com}}
\and
Grzegorz Serafin\thanks{Faculty of Pure and Applied Mathematics, Wroc{\l}aw University of Science and Technology, Ul. Wybrze\.ze Wyspia\'nskiego 27, Wroc{\l}aw, Poland.
e-mail: {\tt grzegorz.serafin@pwr.edu.pl}}
}

\maketitle

\begin{abstract} 
In this article, we provide a comprehensive analysis of the asymptotic behavior of Bell numbers, enhancing and unifying various results previously dispersed in the literature. We establish several explicit lower and upper bounds. The main results correspond to two  asymptotic forms expressed by means of the Lambert $W$ function. As an application,  some straightforward elementary bounds are derived.  Additionally, an absolute convergence rate of the ratio of the consecutive Bell numbers is derived.  The main challenge was to obtain satisfactory  constants, as the Bell numbers  grow rapidly, while  the convergence rates are rather slow. 

\end{abstract}
\noindent\emph{Keywords}:
Bell numbers, ratio, bounds, convergence rates, asymptotics
\\
{\em 2020 Mathematics Subject Classification: 11B73, 05A16, 26D07 } 

\section{Introduction}

The Bell numbers $B_n$ are classical objects in combinatorial theory and have been studied for more than  one and a half centuries. They are subject of numerous studies as they have various applications in other areas of mathematics. In particular, $B_n$ represents the $n$'th moment of the Poisson distribution with the intensity parameter equal to $1$. In spite  of   many formulae and  relationships involving Bell numbers, there is known no simple explicit formula that is applicable for evaluating them for large $n$. The estimates are therefore strongly desirable. The first remarkable bound is due to de Bruijn  \cite{B}, who showed 
\begin{align}\label{eq:Brujin}
\frac{\ln B_n}{n}=\(\ln n-\ln\ln n-1+\frac{\ln\ln n}{\ln n}+\frac1{\ln n}+\frac12\(\frac{\ln\ln n}{\ln n}\)^2\)+O\(\frac{\ln\ln n}{(\ln n)^2}\).
\end{align}
Nevertheless, one can see that this result is imprecise for two reasons: the constants are unknown and the multiplicative error term $\exp\({n\ln\ln n/(\ln n)^2}\)$ tends to infinity as $n\rightarrow\infty$. The asymptotics of $B_n$ have been derived in \cite{L} (see also \cite{GKP, O}) in terms of  the Lambert $W$ function, which is the inverse of the function $[-1,\infty)\ni x\rightarrow x\e^x$:
\begin{align}\label{eq:lovasz}
\frac{ B_n}{E_n^*}\longrightarrow 1,\ \ \ \ \ \text{ as } n\rightarrow \infty,
\end{align}
where 
$$E_n^*=\frac{\exp\({\e^{W(n)}+nW(n)-(n+1)}\)}{ \sqrt{1+W(n)} }.$$
The most precise bounds are due to D. Knuth, who showed in \cite[formulae (30) and (31) in Section 7.2.1.5.]{K} that 
\begin{align}\label{eq:K1}
B_n&=E_n^*\(1+\tfrac{1}{12\e^{W(n)}}+O(\tfrac1n)\),\\\label{eq:K2}
\frac{B_n}{B_{n-1}}&=\e^{W(n)}+O(\tfrac1{W(n)}).
\end{align}
In fact, he claimed that the error bound in \eqref{eq:K1} is $O((\ln n)/n)^2$, which seems to be a mistake. This might be verified by following the argumentation in \cite{K}, and is confirmed by Theorem \ref{thm:master} of this article. It is interesting, despite  their quality, the above bounds seem to be often overlooked in the context of the Bell numbers. 
 The convergence rate to another asymptotic form 
 $$E_n=\frac{n!\egf{R}}{R^{n} \sqrt{2\pi(n+1) (R + 1)}}, \qquad R = W(n+1).$$
 has been derived in \cite[Proposition VIII.3.]{FS}
\begin{align}
\label{eq:FS}
\frac{B_n}{E_n}=1+O\big(\e^{-W(n+1)/5}\big).
\end{align}
However, the error bound turns out not to be optimal.

All the aforementioned results have been achieved using analytic tools. Recently, some probabilistic approaches have been proposed as well. For instance, the asymptotics have been rediscovered in \cite{TE}. Furthermore, an interesting  argument  allowed the authors of  \cite{AAR}    to obtain the bound $B_n\leq \sqrt{1+W(n)}E_n^*$. It does not exactly match the  asymptotic form, but its proof is extremely  short, which deserves some attention.

The main  weakness of the results presented above is that one cannot use them to conclude any bound for $B_n$ for a given $n$.  The explicit upper  bound  
\begin{align}\label{eq:PMS}
B_n\leq \(\frac{0.792n}{\ln(n+1)}\)^n,\hspace{40pt}n\geq1,
\end{align}
is derived in \cite{BT}, however, it does not recover the asymptotic behavior of the Bell numbers. Note that the remarkable interest in the article \cite{BT} reveals the requirement for bounds of that type.

In this paper we improve the  existing results in several directions. First of all, we establish two-sided estimates of $B_n$, which describe asymptotic behaviour  of the Bell numbers with precise rates of convergence (see Theorem \ref{prop:main2}):  
	\begin{align}\label{eq:mainintro}
		1-\frac15\frac{\ln n}{n}\leq \frac{B_n}{E_n^*}\leq 1, \hspace{40pt}n\geq2.
\end{align}
Additionally, in Corollary \ref{cor:errorlim} we show that the order of the above error term is optimal. In Theorem \ref{thm:master} an even more precise bound is derived, which provides explicit constants in the bound \eqref{eq:K1}.  Nevertheless, it is  more complex and  plays rather an auxiliary  role. Many results are presented in the language of the asymptotic form $E_n$ as well, however, the one above is the most elegant one. Furthermore, 
the exponent appearing in the definition of $E^*_n$ is equal to $\int_0^nW(x)dx$, which appears to be quite helpful in some of the proofs.
To the authors knowledge, this simplification has not been  observed in the context of Bell numbers  so far. As an application of the bound \eqref{eq:mainintro} we derive in Proposition \ref{prop:ele} the following simple elementary bounds
$$
	\(\frac1{\e}\frac{n}{\ln n}\)^n\leq B_n
	\leq \(\frac34\frac{n}{\ln n}\)^n,\hspace{40pt}n\geq2.
$$
Here, the main contribution is the lower bound, as  the upper one is very similar to  \eqref{eq:PMS}.
 Another main result of the paper is the following bound of the ratio of two consecutive Bell numbers (see Theorem \ref{thm:B/B})
$$\left|\frac{B_{n}}{B_{n-1}}-\e^{W(n)}\right|\leq  \frac87\frac1{W(n)}, \hspace{40pt}n\geq1,$$ 
which complements \eqref{eq:K2} with explicit constants.
Let us point out that  even though the function $\e^{W(x)}$ (which is the inverse of $x\ln x$) tends to infinity as $x\rightarrow\infty$, the obtained error is an absolute one.  

The basic idea of the proofs  relies on the saddle point method, that  was already  used in e.g., \cite{B, FS, K, MW}.  these references are books that consider the Bell numbers as just one of many examples, with relatively little attention devoted to them, which was one of the motivations to write  this article.  Although the starting point of the main proof is not new, many obstacles appear when it comes to the details. This is a consequence of the fact that some of the  expressions in the proofs converge very slowly. Some difficulties could be avoided by considering very large $n$, but then verifying the initial values might be beyond  the capabilities of current computers, as the Bell numbers grow very rapidly. We therefore propose an approach that ensures some kind of balance between optimization of constants and effort put in. In particular, properties of the Lambert $W$ functions are intensively exploited. After all, we leave for numerical verification  the obtained bounds for $n\leq 741$, which is equivalent  to the condition $W(n+1)\geq5$. This  is possible to perform quickly on an average computer. Additionally, the crucial bounds are customisable, so that one can easily improve the constants if interested in larger indices.

The paper is organized as follows. In Section 2 the Bell numbers and the Lambert $W$ function are introduced, and properties of some special sequence are studied, that are frequently used later on.  Section 3 contains customisable bounds on integrals constituting the numbers $B_n$. Section 4 is devoted to explicit convergence rates of $B_n$, and in Section 5 we deal with the ratio $B_n/B_{n-1}$. In Appendix, one can find a code in Python  of a program that verifies numerically some of the results for the initial Bell numbers.

\section{Preliminaries}

\subsection{Bell numbers}
The Bell numbers are  numbers of partitions of a set of $n$ elements. 
They satisfy the following recurrence formula
$$B_0=1,\hspace{10mm}B_{n+1}=\sum_{k=1}^n{n\choose k}B_k,$$
which is usually exploit to produce consecutive values of $B_n$. One can also use it to obtain the  exponential generating function 
\begin{align}\label{eq:egf}
B(t):=\sum_{n=0}^\infty  B_n\frac{t^n}{n!}=\e^{\e^t-1},\ \ \ t\in\R.
\end{align}
The famous Dobi{\'n}ski formula \cite{D} states that 
\begin{align}\label{eq:Dobinski}
B_n=\frac1{\e}\sum_{k=0}^\infty\frac{k^n}{k!},\ \ \ n\geq1.
\end{align}
As mentioned in Introduction, the main two asymptotic forms of the Bell numbers are
\begin{align}\label{eq:E_n}
E_n&=\frac{n!\egf{R}}{R^{n} \sqrt{2\pi(n+1) (R + 1)}}, \quad R = W(n+1)\\[3pt]\label{eq:E_n^*}
E_n^*&=\frac{\exp\({\e^{W(n)}+nW(n)-(n+1)}\)}{ \sqrt{1+W(n)} },
\end{align}
i.e., we have
$$\lim_{n\rightarrow\infty}\frac{B_n}{E_n}=\lim_{n\rightarrow\infty}\frac{B_n}{E_n^*}=1.$$
\subsection{Lambert $W$ function}
By $W(x)$, $x\geq0$, we denote the principal branch of the Lambert $W$ function, which is the inverse of the function $[-1,\infty)\ni x\rightarrow x\e^x$. Directly from the definition we obtain the relation
\begin{align}\label{eq:eW=x/W}
\e^{W(x)}=\frac x{W(x)}.
\end{align}
Using elementary calculus we get the following  formula for the derivative of $W(x)$
\begin{align}\label{eq:derW}
W'(x)=\frac{1}{x+{\e^{W(x)}}}.
\end{align}
In particular, this means that  $W$ is concave. Thus, for $0<x<y$ we have
\begin{align}\label{eq:aux7}
W(y)-W(x)&\leq W'(x)(y-x)=\frac{y-x}{x+\e^{W(x)}}\leq \frac{y-x}{x}.
\end{align}

The function $W$ cannot be expressed in terms of elementary functions, but several series representations are available. One of them is given by \cite{CGHJK}
$$W(x)=\ln x-\ln\ln x+\sum_{k=0}^\infty\sum_{m=1}^{\infty}\frac{(-1)^k|s(k+m,k+1)|}{m!}\frac{(\ln\ln x)^m}{(\ln x)^{k+m}},$$
where $s(n,k)$ stands for the Stirling numbers of the first kind. Nevertheless, the above series is not easy to work with, thus rather  simple bounds of the function $W(x)$ are required. One can easily check that  
 \begin{align}\label{eq:Wbounds0}
\ln x-\ln\ln x\leq W(x)\leq \ln x,\ \ \ x\geq \e.
\end{align}
%

The value of the Lambert $W$ function at $1$  is called the omega constant and equals
\begin{align}\label{eq:omega}\Omega=W(1)=0.567\ldots.
\end{align}

At the end of this section we invoke the formula for the integral of the Lambert $W$ function. Namely, by virtue of  the standard formula for the integral of an inverse function, we have
\begin{align}\label{eq:intW}
\int_0^xW(s)ds=\e^{W(x)}+xW(x)-x-1, \hspace{20mm} x\geq0.
\end{align}

\subsection{A special sequence}
 Let us denote 
$$q_n:=Q(W(n+1)):=1-\e^{-W(n+1)}\frac{1-\tfrac3{2W(n+1)}-\tfrac{10}{{W(n+1)}^2}-\tfrac{9}{{W(n+1)}^3}+\tfrac1{{W(n+1)}^4}}{12\(1+\tfrac1{W(n+1)}\)^{3}},$$
which  turns out to play an important role in asymptotics of the Bell numbers. In particluar, it approximates $B_n/E_n$ in Theorem \ref{thm:master}. Here are some properties of the sequence $q_n$, that will be used in the sequel.
\begin{lemma}
If $W(n+1)\geq5 $ (i.e., $n\geq 742$), then 
\begin{align}\label{eq:q0}
1\geq q_n&\geq 1-\frac{\e^{-W(n+1)}}{12},\\[5pt]\label{eq:q1}
q_n+1.6\e^{-2W(n+1)}&\leq 1,\\[2pt]\label{eq:q2}
|q_{n+1}-q_n|&\leq \frac{\e^{-W(n+1)}}{10(n+1)}.
\end{align}
\end{lemma}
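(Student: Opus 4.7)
The plan is to reduce all three inequalities to statements about the univariate function
$$Q(w) = 1 - \e^{-w}\frac{N(w)}{12\, D(w)}, \quad N(w) := 1-\tfrac{3}{2w}-\tfrac{10}{w^2}-\tfrac{9}{w^3}+\tfrac{1}{w^4}, \quad D(w):=\left(1+\tfrac{1}{w}\right)^{3},$$
evaluated at $w = W(n+1) \geq 5$, so that $q_n = Q(w)$.

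For \eqref{eq:q0} I would expand $D(w) = 1 + 3/w + 3/w^2 + 1/w^3$ and observe that both $N(w)$ and
$$D(w) - N(w) = \tfrac{9}{2w} + \tfrac{13}{w^2} + \tfrac{10}{w^3} - \tfrac{1}{w^4}$$
are positive for $w \geq 5$. This yields $0 \leq N(w)/D(w) \leq 1$ and hence both inequalities of \eqref{eq:q0}. For \eqref{eq:q1}, the claim rewrites as $N(w)/D(w) \geq 19.2\,\e^{-w}$. The right-hand side is strictly decreasing, while $N/D$ is increasing on $[5,\infty)$: its derivative $(N'D - ND')/D^2$ is positive because $N'(w) = 3/(2w^2) + 20/w^3 + 27/w^4 - 4/w^5 > 0$ and $-D'(w) = 3/w^2 + 6/w^3 + 3/w^4 > 0$ for $w \geq 5$, so both $N'D$ and $-ND'$ are positive. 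It therefore suffices to check the inequality at the single point $w=5$, which is a direct numerical comparison (both sides land near $0.13$, with a small but confirmed gap in favour of $N/D$).

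For \eqref{eq:q2} I would apply the mean value theorem: there is $\xi \in (W(n+1), W(n+2))$ with
$$|q_{n+1} - q_n| = |Q'(\xi)|\,\bigl(W(n+2) - W(n+1)\bigr).$$
Concavity of $W$ together with formulae \eqref{eq:derW} and \eqref{eq:eW=x/W} give
$$W(n+2) - W(n+1) \leq W'(n+1) = \frac{1}{(n+1) + \e^{W(n+1)}} = \frac{1}{(n+1)\bigl(1 + 1/W(n+1)\bigr)}.$$
Differentiating produces $Q'(w) = \e^{-w}(P(w) - P'(w))$ with $P = N/(12D)$, and from the first step I already have $0 \leq P \leq 1/12$. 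An explicit bound on $|P'(w)|$ over $[5,\infty)$ follows from the termwise estimates on $N'$ and $D'$ above combined with $N \leq D$ and $D \geq 1$, and is calibrated to yield $|P(w) - P'(w)| \leq (1+1/w)/10$ for $w \geq 5$. Since $\xi \geq W(n+1)$ gives $\e^{-\xi} \leq \e^{-W(n+1)}$, combining the three estimates delivers exactly $|q_{n+1} - q_n| \leq \e^{-W(n+1)}/(10(n+1))$.

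The main obstacle is the third item, where the calibration of constants is tight: as $w \to \infty$, $P(w) \to 1/12 = 0.0833\ldots$ while the allowed slack is $(1+1/w)/10$, tending to $0.1$, so there is very little room for error in bounding $|P'|$. The first two items are essentially algebraic verifications, with the only subtlety in \eqref{eq:q1} being the monotonicity of $N/D$, which reduces to a polynomial positivity statement in $1/w$ that is easy to verify termwise.
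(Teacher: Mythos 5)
Your proof is correct. For \eqref{eq:q0} and \eqref{eq:q1} the argument is essentially the paper's: both reduce to positivity and monotonicity of polynomials in $1/w$, and your version of \eqref{eq:q1} (monotone $N/D$ versus monotone decreasing $19.2\e^{-w}$, checked at $w=5$) is a rearrangement of the paper's single increasing function $f(x)=N(x)-19.2\,D(x)\e^{-x}$ checked at $x=5$.

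The genuine departure is in \eqref{eq:q2}. The paper obtains $|Q'(x)|\le \e^{-x}/10$ by expanding $Q'$ into an explicit rational function of $x$, splitting it into seven terms, and substituting $x=5$ into each. You instead work from $Q'(w)=\e^{-w}\bigl(P(w)-P'(w)\bigr)$ with $P=N/(12D)$, and combine the already-established $0\le P\le 1/12$ with a crude bound on $P'$ (e.g.\ $P'\le (N'-D')/12$, which follows from $N'>0$, $-D'>0$, $D\ge1$, $N/D\le1$). Since $P,P'\ge0$, this gives $|P-P'|\le\max(P,P')\le 1/12$, hence $|Q'(w)|\le\e^{-w}/12$, with slack to spare and no polynomial expansion. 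Your route also sidesteps a real subtlety in the paper's termwise step: the leading term $2\bigl(\tfrac{x}{x+1}\bigr)^4$ is \emph{increasing} on $[5,\infty)$, so evaluating it at $x=5$ does not bound it from above, and replacing it by its supremum $2$ makes the triangle-inequality estimate exceed $\e^{-x}/10$; the final inequality is nonetheless true (your argument even gives the sharper constant $1/12$), so the issue is only in the paper's intermediate justification. One last remark: your refinement $W(n+2)-W(n+1)\le \tfrac{1}{(n+1)(1+1/W(n+1))}$ together with the matching target $|P-P'|\le (1+1/w)/10$ is unnecessary; the paper's simpler bound $W(n+2)-W(n+1)\le 1/(n+1)$ from \eqref{eq:aux7} already closes the argument once one has $|P-P'|\le 1/12$.
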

\begin{proof}Regarding \eqref{eq:q0}, it suffices to show that the expression 
$$\tfrac3{2x}+\tfrac{10}{{x}^2}+\tfrac{9}{{x}^3}-\tfrac1{{x}^4}$$ 
is positive and less than one for $x\geq 5$. Positivity is clear. Furthermore, it is a decreasing function of $x$ (the negative sign of $1/x^4$ does not change much),  and its value at $5$ is less than one, which proves \eqref{eq:q0}.
  
Next,  we have
$$Q(x)+1.6\e^{-2x}=1-\frac{\e^{-x}}{12(1+\frac1x)^3}\big(\underbrace{1-\tfrac3{2x}-\tfrac{10}{{x}^2}-\tfrac{9}{{x}^3}+\tfrac1{{x}^4}-19.2(1+\tfrac1x)^3\e^{-x}}_{:=f(x)}\big).$$
Since the function $f$ is  increasing on $[5,\infty)$  and  
 
$$f(5)=1-\tfrac3{10}-\tfrac{2}{5}-\tfrac{9}{125}-\tfrac1{625}-19.2(\tfrac65)^3\e^{-5}=\frac{141.5-20736\e^{-5}}{625}\geq0,$$
we get  \eqref{eq:q1}.

Finally, we will derive \eqref{eq:q2} bounding  the derivative of $Q$. Precisely, we have
\begin{align*}
Q'(x)&=\frac{\e^{-x}}{24}\frac{2x^6-x^5-32x^4-72x^3-50x^2+10x+2}{x^2(x+1)^4}\\
&=\frac{\e^{-x}}{24} \left[2 \(\frac{x}{x+1}\)^4 - \frac{x^3}{(x+1)^4} - 32 \frac{x^2}{(x+1)^4} \right.\\
&\phantom{=\frac{\e^{-x}}{24}=} -\left. 72\frac{x}{(x+1)^4} - \frac{50}{(x+1)^4} + \frac{10}{x(x+1)^4} + \frac{2}{x^2(x+1)^4}\right],
\end{align*}
and hence, for $x\geq5$,
\begin{align*}
|Q'(x)|&\leq\frac{\e^{-x}}{24} \left[2\left(\frac56\right)^4 + \frac{5^3}{6^4} + \frac{32\cdot 5^2}{6^4} + \frac{72\cdot5}{6^4} + \frac{50}{6^4} + \frac{10}{5\cdot 6^4} + \frac{2}{5^2\cdot 6^4}\right] \leq \frac{\e^{-x}}{10}.
\end{align*}
Thus, the mean value theorem and the inequality \eqref{eq:aux7} give us
\begin{align*}
|q_{n+1}-q_n|\leq \sup_{x\in\left(W(n+1),~W(n+2)\right)}|Q'(x)|\(W(n+2)-W(n+1)\)\leq \frac{\e^{-W(n+1)}}{10(n+1)},
\end{align*}
which ends the proof.
\end{proof}

\section{Customizable bounds}

By the Cauchy theorem and the formula for the exponential  generating function \eqref{eq:egf} we get
	\begin{equation}\label{eq:2}
		B_n = \frac{n!}{2\pi i}\oint_{\Gamma} \frac{\egf{z}}{z^{n+1}}  d  z,
	\end{equation}
	where $\Gamma$ is the circle centred at the origin   $z = 0$ whose radius is some  $R>0$.
	Next, since the function ${\egf{z}}/{z^{n+1}}$ is holomorphic on  $\mathbb{C} \setminus \{0\}$, for any  $0<\eps <R$ we deform the circle into the sum of the following four curves
	\begin{align}
		\Gamma_1: \gamma_1(t) &= R + it, t \in (-\eps, \eps),\label{Gamma1}\\
		\Gamma_2: \gamma_2(t) &= R - t + i\eps, t \in (0, R - \sqrt{R^2 - \eps^2}),\label{Gamma2}\\
		\Gamma_3: \gamma_3(t) &= t - i\eps, t \in (\sqrt{R^2 - \eps^2}, R),\label{Gamma3}\\
		\Gamma_4: \gamma_4(t) &= R\e^{it}, t \in (\delta, 2\pi - \delta), \delta = \arcsin\left(\frac{\eps}{R}\right).\label{Gamma4}
	\end{align}

\begin{adjustbox}{center}
		\begin{tikzpicture}[remember picture, scale=0.7]
			\draw[ultra thick, black, ->] (-7,0) -- (9,0);
			\draw[ultra thick, black, ->] (0,-6) -- (0,6);
			\node[below left] at (9,0) {\Large$\Re$};
			\node[below right] at (0,6) {\Large$\Im$};
			\node[color=red, above right] at (3, 4) {\Large$\Gamma_4$};
			\node[color=blue, above right] at (5, -1.4) {\Large$\Gamma_1$};
			\node[color=red, above right] at (4.5, 2) {\Large$\Gamma_2$};
			\node[color=red, below right] at (4.5, -2.1) {\Large$\Gamma_3$};
			\draw[color=blue, ultra thick] (5, -2) -- (5, 2);
			\draw[color=red, ultra thick] (4.56, 2) -- (5, 2);
			\draw[color=red, ultra thick] (4.56, -2) -- (5, -2);
			\draw[color=black, thin] (5, 0) circle (2);
			\draw[color=black, thin] (0, 0) -- (3, 4);
			\draw[color=black, thin] (5, 0) -- (6, 1.73);
			\node[color=black, left] at (2, 3) {\Large$R$};
			\node[color=black, left] at (6.2, 0.8) {\Large$\eps$};
			\draw[color=black, thin, dashed] (0, 0) circle (5);

			\tikzstyle{reverseclip}=[insert path={(current page.north east) --
				(current page.south east) --
				(current page.south west) --
				(current page.north west) --
				(current page.north east)}
			]
			\begin{pgfinterruptboundingbox}
				\path [clip] (6, -2) rectangle(4, 2) [reverseclip];
			\end{pgfinterruptboundingbox}
			
			\draw[color=red, ultra thick] (0, 0) circle (5);
		\end{tikzpicture}
	\end{adjustbox}
	This gives us
	\begin{equation}\label{eq:3}
		B_n =  J_1 + J_2 + J_3 + J_4,
	\end{equation}
	where
	\begin{equation*}
		J_i =\frac{n!}{2\pi i} \int_{\Gamma_i} \frac{\egf{z}}{z^{n+1}}  d  z.
	\end{equation*}
	For any fixed positive integer $n$ the optimal $R$ turns out to be $W(n+1)$, hence, from now on, we denote 
	$$R=R(n)=W(n+1).$$

\begin{fact}\label{prop:J_1} For $R\geq5$ and  $\eps<1$ such that $\eps^2{\e^R}>5$, we have
\begin{align}
&\left|\frac{J_1}{E_n}-\(1-\frac{\e^{-R}}{12}\frac{1-\tfrac3{2R}-\tfrac{10}{R^2}-\tfrac{9}{R^3}+\tfrac1{R^4}}{\(1+\tfrac1R\)^{3}}\)\right|\nonumber\\
&\leq\sqrt{\frac 2\pi}\frac1{\eps}\exp\({{-\frac12\eps^2\e^R-\frac12R}}\)+ \frac{6}{5}\exp\(\frac{\e^R\eps^4}{22}-2R\)+\frac{\eps^{7}\exp\(-\tfrac12\eps^2{\e^R}+\tfrac52R\)}{30\(\eps^2{\e^R}-5\)}\label{prop1}.
\end{align}
\end{fact}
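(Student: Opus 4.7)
My approach is the classical saddle-point method applied to $J_1$, executed with all constants tracked explicitly. First, I parameterize $\Gamma_1$ by $z = R+it$ with $t \in (-\eps,\eps)$; using the identity $\e^R = (n+1)/R$ (from $R = W(n+1)$), one rewrites
$$
\frac{J_1}{E_n} = \sqrt{\frac{c_2}{\pi}} \int_{-\eps}^{\eps} \e^{F(t)}\,dt, \qquad F(t) := \phi(R+it) - \phi(R),
$$
where $\phi(z) = \e^z - (n+1)\log z$ and $c_2 := \tfrac12 \phi''(R) = \e^R(R+1)/(2R)$. Since $\phi'(R) = 0$, Taylor's formula gives $F(t) = -c_2 t^2 + \sum_{k \geq 3} c_k (it)^k$ with $c_k = \e^R\bigl(1+(-1)^k(k-1)!\,R^{-(k-1)}\bigr)/k!$.

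Next, I extract the main term by approximating $\e^{F(t)} \approx \e^{-c_2 t^2}\,P(t)$, where $P(t) := 1 + c_3(it)^3 + c_4(it)^4 + \tfrac12 c_3^2(it)^6$ is the polynomial collecting every contribution of order $\e^{-R}$ to the integral. Extending the $P$-piece from $(-\eps,\eps)$ to $\R$ and invoking the Gaussian moments $\sqrt{c_2/\pi}\int_\R t^{2k}\e^{-c_2 t^2}\,dt = (2k-1)!!/(2c_2)^k$, the odd monomials drop out by parity, and the $(it)^4$ and $\tfrac12 c_3^2(it)^6$ pieces combine---after substituting the explicit values of $c_2, c_3, c_4$---into exactly
$$
1 - \frac{\e^{-R}}{12(1+1/R)^3}\Bigl(1 - \tfrac{3}{2R} - \tfrac{10}{R^2} - \tfrac{9}{R^3} + \tfrac{1}{R^4}\Bigr),
$$
which is the main term in the statement; the factor $(1+1/R)^3$ emerges directly from $1/c_2^3$.

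Three independent error sources then need to be matched with the three summands in the bound. (i) The \emph{Gaussian tail} $\sqrt{c_2/\pi}\int_{|t|>\eps}\e^{-c_2 t^2}\,dt \leq \frac{1}{\eps\sqrt{\pi c_2}}\e^{-c_2\eps^2}$; using $c_2 \geq \e^R/2$ gives the first summand exactly. (ii) A \emph{truncation error} $\bigl|\e^{F(t)} - \e^{-c_2 t^2}P(t)\bigr|$ on $|t|\leq\eps$: here one needs a pointwise estimate of the form $|F(t)+c_2 t^2 - c_3(it)^3 - c_4(it)^4| \leq \e^R t^4/22$ (plus harmless small corrections), producing the second summand---its $\e^{-2R}$ factor comes from squaring the $\e^{-R}$-small perturbation, while the $\exp(\e^R\eps^4/22)$ prefactor encodes the boundary behaviour. (iii) A \emph{high-order remainder} of order $|t|^7$ within the $P$-approximation, bounded via $|\e^{F(t)}| = \e^{\Re F(t)}$ and a uniform control of $\Re F(t)$ leaving enough room for an integral of type $\int|t|^7 \e^{-c_2 t^2}\,dt$; the assumption $\eps^2\e^R > 5$ is precisely what guarantees convergence of the resulting incomplete-Gaussian integral and yields the denominator $30(\eps^2\e^R-5)$.

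The principal obstacle is step (ii)---producing an explicit pointwise bound on $|\e^{F(t)} - \e^{-c_2 t^2}P(t)|$ with the narrow constants $6/5$ and $22$. One must control both $\Re F(t) + c_2 t^2$ and $\Im F(t)$ uniformly on $|t|\leq\eps<1$, using $R \geq 5$ and the monotonicity of alternating partial sums of the Taylor series of $\e^{iu}-1$ and $\log(1+iu/R)$. The trade-off between the $\exp(\eps^4 \e^R)$ growth (favouring small $\eps$) and the $\exp(-c_2 \eps^2)$ Gaussian decay (favouring large $\eps$) is exactly what makes the customizable form of the proposition useful for later applications, where an explicit $\eps$ is chosen to optimize the three terms simultaneously.
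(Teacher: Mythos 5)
Your proposal follows essentially the same saddle-point decomposition as the paper's proof: the same polynomial $P(t)=1+c_3(it)^3+c_4(it)^4+\tfrac12c_3^2(it)^6$, the same Gaussian-moment computation for the main correction, and the same three error sources (Gaussian tail of the leading piece, truncation of $\e^{F(t)}$ by $\e^{-c_2t^2}P(t)$, and an incomplete-Gamma bound on the polynomial tail over $|t|>\eps$). One small imprecision in your step (ii): the paper does not bound $|F(t)+c_2t^2-c_3(it)^3-c_4(it)^4|$ by $\e^Rt^4/22$ (the left side is actually $O(\e^R t^5)$); instead it splits $F(t)+c_2t^2$ into real and imaginary parts $F_2,F_1$, obtains $\Re\bigl(F(t)+c_2t^2\bigr)=F_2(t)\leq\tfrac{2}{45}\e^Rt^4<\tfrac{1}{22}\e^Rt^4$ to control the prefactor $\e^{F_2(t)}$, and bounds $\bigl|\cos(F_1)\e^{F_2}-1-\bigl(c_4t^4-\tfrac12c_3^2t^6\bigr)\bigr|$ term by term.
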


\begin{proof} Using \eqref{Gamma1}, we have
	\begin{align*}
		J_1 &= \frac{n!}{2\pi i}\int_{\Gamma_1} \frac{\egf{z}}{z^{n+1}}  d  z 
		= \frac{n!}{2\pi\e} \int_{-\eps}^{\eps} \e^{F(t)}  d  t,
	\end{align*}
	where 
	$$F(t) = \e^{R + it} - (n+1)\Log(R+it),\ \ \ \ t\in\R.$$  
	Here, $\Log(\cdot)$ stands for the principal value of the complex logarithm. It is easy to calculate that 
		\begin{equation}\label{ind:1}
			F^{(k)}(t) = i^k\left(\e^{R+it} + (-1)^k (n+1) \frac{(k-1)!}{(R+it)^k}\right),\ \ \ \ k\geq1.
		\end{equation}
	Taking $R$ such that $R\e^R = n+1$ (or equivalently $R = W(n+1)$), for $|t|<R$ we have
	\begin{align*}
		F(t)&=\sum_{k=0}^{\infty} \frac{F^{(k)}(0)}{k!}t^k 
		= \e^R - (n+1)\ln(R) + \sum_{k=1}^{\infty} i^k\left(\e^{R} + (-1)^k (n+1) \frac{(k-1)!}{R^k}\right) \frac{t^k}{k!}\\
		&= \e^R - (n+1)\ln(R) + \sum_{k=2}^{\infty} i^k\e^{R}\left(1 + (-1)^k  \frac{(k-1)!}{R^{k-1}}\right) \frac{t^k}{k!},
	\end{align*}
where the last equality is a consequence of $F'(0) = i\left(\e^R - \frac{R\e^R}{R}\right) = 0$. Next, we  decompose
	\begin{align}\label{eq:aux4}
	\e^{F(t)}
	&=\e^{F(0)+F''(0)t^2/2}+\e^{F(0)+F''(0)t^2/2}\(\e^{F(t)-F(0)-F''(0)t^2/2}-1\).
	\end{align}
	The two terms on the right-hand side  are crucial for the behaviour of the Bell numbers. The first one drives the asymptotic form. Indeed, when integrated over whole real line, we get
	\begin{equation}\label{eq:aux5}
	\frac{n!}{2\pi \e}\int_\R \e^{F(0)+F''(0)t^2/2}dt=\frac{n!\e^{F(0)}}{\sqrt{2\pi}\e\sqrt{|F''(0)|}}=\frac{n!\e^{\e^R-(n+1)\ln R-\tfrac12R}}{\sqrt{2\pi}\e\sqrt{1+\frac1R}}=E_n,
	\end{equation}
and consequently
\begin{align*}
&\left|\frac{n!}{2\pi\e}\int_{-\eps}^{\eps}\e^{F(0)+F''(0)t^2/2} d  t-E_n\right|\\
&=\left|\frac{n!}{\pi\e}\e^{F(0)}\int_{\eps}^\infty \e^{F''(0)t^2/2} d  t\right|=\frac{n!}{\sqrt{2\pi}\e}\frac{\e^{F(0)}}{\sqrt{|F''(0)|}}\erfc\(\eps\sqrt{|F''(0)|/2}\)\\
&\leq \frac{n!}{\pi\e\eps{|F''(0)|}}\e^{F(0)-\eps^2|F''(0)|/2}\leq \frac{n!}{\pi\e\eps\sqrt{\e^R|F''(0)|}}\e^{F(0)-\eps^2\e^R/2}\\
&= E_n\sqrt{\frac 2\pi}\frac{\e^{-\eps^2\e^R/2-R/2}}{\eps},
\end{align*}	
where we used \eqref{eq:aux5} and the inequalities $F''(0)\leq -\e^R$ and  $\erfc(t)  \leq {\e^{-t^2}}/{t\sqrt{\pi}}$, $t>0$.
The latter term in \eqref{eq:aux4} determines the convergence rate, and will be therefore bounded very precisely. Since $\e^{\overline{F(z)}} = \overline{\e^{F(z)}}$, we may write
\begin{align*}
&\frac{n!}{2\pi\e} \int_{-\eps}^{\eps} \e^{F(0)+F''(0)t^2/2}\(\e^{F(t)-F(0)-F''(0)t^2/2}-1\)  d  t\\
&=\frac{n!}{\pi\e} \int_{0}^{\eps} \Re\[\e^{F(0)+F''(0)t^2/2}\(\e^{F(t)-F(0)-F''(0)t^2/2}-1\) \] d  t\\
&=\frac{n!}{\pi\e} \int_{0}^{\eps} \e^{F(0)+F''(0)t^2/2}\(\cos(F_1(t))\e^{F_{2}(t)}-1\)  d  t,
\end{align*}
where 
\begin{align*} 
F_1(t)&=\Im\(F(t)-F(0)-F''(0)t^2/2\)= \sum_{k=1}^{\infty} (-1)^{k}\e^{R}\left(1 -   \frac{(2k)!}{R^{2k}}\right) \frac{t^{2k+1}}{(2k+1)!},\\
F_2(t)&=\Re\(F(t)-F(0)-F''(0)t^2/2\)= \sum_{k=2}^{\infty} (-1)^k\e^{R}\left(1 +   \frac{(2k-1)!}{R^{2k-1}}\right) \frac{t^{2k}}{(2k)!}.
\end{align*}
Let us now estimate the integrand in the last integral. This is a delicate task, since $\cos\left(F_1(t)\right)$ is decreasing, while $\e^{F_{2}(t)}$ is increasing. 
We will approximate  $\cos(F_1(t))\e^{F_{2}(t)}-1$ by $F^{(4)}(0)\tfrac{t^4}{4!}-(F^{(3)}(0))^2\tfrac{t^6}{2(3!)^2}$ and estimate the difference between them. This approach is a consequence of multivariate Taylor theorem. We have
\begin{align*}
&\(\cos(F_1(t))\e^{F_{2}(t)}-1\)-\(F^{(4)}(0)\tfrac{t^4}{4!}-(F^{(3)}(0))^2\tfrac{t^6}{2(3!)^2}\)\\
&=\(\cos(F_1(t))\e^{F_{2}(t)}-1\)-\(F_2(t)-\tfrac12(F_1(t))^2\)\\
&\ \ \ +\(F_2(t)-\tfrac12(F_1(t))^2\)-\(F^{(4)}(0)\tfrac{t^4}{4!}-(F^{(3)}(0))^2\tfrac{t^6}{2(3!)^2}\)\\
&=\e^{F_{2}(t)}\(\cos(F_1(t))-1+\tfrac12(F_1(t))^2\)+\(\e^{F_2(t)}-1-F_2(t)\)+\tfrac12(F_1(t))^2\(1-\e^{F_2(t)}\)\\
&\ \ \ +\(F_2(t)-F^{(4)}(0)\tfrac{t^4}{4!}\)-\frac12\(F_1(t)-F^{(3)}(0)\tfrac{t^3}{3!}\)\(F_1(t)+F^{(3)}(0)\tfrac{t^3}{3!}\).
\end{align*}
One can easily verify that absolute values of coefficients in both of the series defining $F_1$ and $F_2$ are decreasing for $R\geq5$ and $|t|<1$, hence we may bound them by their first terms:
\begin{align*}
|F_1(t)|&\leq \frac{|F^{(3)}(0)|t^3}{3!}\leq\frac{\e^R|t|^3}{3!}\(1-\frac{2}{R^2}\)\leq \frac{\e^R|t|^3}{6},\\
|F_2(t)|&\leq \frac{|F^{(4)}(0)|t^4}{4!}\leq \frac{\e^Rt^4}{4!}\(1+\frac{3!}{5^3}\)\leq \frac2{45}{\e^Rt^4}.
\end{align*}
Analogously we also get
\begin{align*}
|F_1(t)-F^{(3)}(0)\tfrac{t^3}{3!}|&\leq \frac{|F^{(5)}(0)|t^5}{5!}\leq \frac{\e^R|t|^5}{5!},\\
|F_2(t)-F^{(4)}(0)\tfrac{t^4}{4!}|&\leq \frac{|F^{(6)}(0)|t^6}{6!}\leq \frac{\e^Rt^6}{6!}\(1+\frac{5!}{5^5}\)\leq \frac{27}{26}\frac{\e^Rt^6}{6!}.
\end{align*}
Using also inequalities  $|\cos x-1+\tfrac12x^2|<\tfrac1{4!}x^4$, $|\e^x-1|\leq x\e^x$ and $|\e^x-1-x|\leq \frac12x^2\e^x$, $x>0$, we obtain 
\begin{align*}
&\left|\(\cos(F_1(t))\e^{F_{2}(t)}-1\)-\(F^{(4)}(0)\tfrac{t^4}{4!}-(F^{(3)}(0))^2\tfrac{t^6}{2(3!)^2}\)\right|\\
&\leq \e^{F_2(t)}\(\frac{(F_1(t))^4}{4!}+\frac{(F_2(t))^2}{2}+\frac12(F_1(t))^2F_2(t)\)+\frac{|F^{(6)}(0)|t^6}{6!}+\frac12\frac{|F^{(5)}(0)|t^5}{5!}2\frac{|F^{(3)}(0)|t^3}{3!}\\
&\leq \e^{2\e^R\eps^4/45}\(\frac{\e^{4R}t^{12}}{4!6^4}+\frac{2\e^{2R}t^8}{(45)^2}+\frac{\e^{2R}t^{6}}{6^2}\frac{\e^Rt^4}{45}\)+\frac{27}{26}\frac{\e^Rt^6}{6!}+\frac{\e^{2R}t^8}{6!}\\
&\leq \e^{\e^R\eps^4/22}\(\frac{\e^{4R}t^{12}}{4!6^4}+\frac{\e^{2R}t^8}{420}+\frac{\e^{2R}t^{6}}{6^2}\frac{\e^Rt^4}{45}+\frac{27}{26}\frac{\e^Rt^6}{6!}\),
\end{align*}
where we used $2\cdot45^{-2}+(6!)^{-1}\leq 1/416$. Consequently
\begin{align*}
&\frac{n!}{\pi\e} \int_{0}^{\eps} \e^{F(0)+F''(0)t^2/2}\left|\(\cos(F_1(t))\e^{F_{2}(t)}-1\)-\(F^{(4)}(0)\tfrac{t^4}{4!}-(F^{(3)}(0))^2\tfrac{t^6}{2(3!)^2}\)\right|  d  t\\
&\leq \frac{n!\e^{F(0)}}{\pi\e}\e^{2\e^R\eps^4/45} \int_{0}^{\infty} \e^{F''(0)t^2/2}\(\frac{\e^{4R}}{4\cdot6^5}t^{12}+\frac{\e^{2R}}{416}t^8+\frac{\e^{3R}}{1620}t^{10}+\frac{3\e^{R}}{2080}t^6\)  d  t.
\end{align*}
Applying the bound
$$\int_{0}^{\infty}\e^{F''(0)t^2/2}t^k d  t=\frac12\left|\frac{2}{F''(0)}\right|^{(k+1)/2}\Gamma\(\frac{k+1}{2}\)\leq \frac{2^{(k-1)/2}\e^{-Rk/2}}{\sqrt{|F''(0)|}}\Gamma\(\frac{k+1}{2}\),$$
and verifying 
$$\Gamma\(\frac72\)=\frac{15\sqrt\pi}{8}, \ \ \  \Gamma\(\frac92\)=\frac{7!!\sqrt\pi}{16}. \ \ \ \Gamma\(\frac{11}2\)=\frac{9!!\sqrt\pi}{32}, \ \ \  \Gamma\(\frac{13}2\)=\frac{11!!\sqrt\pi}{64},$$
we estimate further 
\begin{align*}
&\leq \frac{n!\e^{F(0)}}{{\pi}\e\sqrt{|F''(0)|}}\e^{\e^R\eps^4/22}{\e^{-2R}}\(\frac{2^{11/2}\Gamma\(\frac{13}{2}\)}{4\cdot6^5}+\frac{2^{7/2}\Gamma\(\frac{9}{2}\)}{416}+\frac{2^{9/2}\Gamma\(\frac{11}{2}\)}{1620}+\frac{3\cdot2^{5/2}\Gamma\(\frac{7}{2}\)}{2080}\) \\
&=E_n\sqrt{\frac2{\pi}}\e^{\e^R\eps^4/22}{\e^{-2R}}\(\frac{\Gamma\(\frac{13}{2}\)}{2\cdot3^5\sqrt2}+\frac{\Gamma\(\frac{9}{2}\)}{26\sqrt2}+\frac{8\Gamma\(\frac{11}{2}\)}{405\sqrt2}+\frac{3\Gamma\(\frac{7}{2}\)}{260\sqrt2}\) \\
&=E_n\e^{\e^R\eps^4/22}{\e^{-2R}}\(\frac{385}{1152}+\frac{105}{416}+\frac{21}{36}+\frac9{416}\) \\
&\leq \frac{6}5E_n\e^{\e^R\eps^4/22}{\e^{-2R}}.
\end{align*}
Eventually, substituting $u=F''(0)t^2/2$, we get
\begin{align*}
&\frac{n!}{\pi\e} \int_{0}^{\eps} \e^{F(0)+F''(0)t^2/2}\(F^{(4)}(0)\tfrac{t^4}{4!}-(F^{(3)}(0))^2\tfrac{t^6}{2(3!)^2}\)  d  t\\
&=\frac{n!\e^{F(0)}}{\pi\e} \int_{0}^{\eps^2{|F''(0)|/2}} \e^{-u}\(\frac{F^{(4)}(0)}{48}\left|\frac{2}{F''(0)}\right|^{5/2}u^{3/2}-\frac{(F^{(3)}(0))^2}{144}\left|\frac{2}{F''(0)}\right|^{7/2}u^{5/2}\)  d  t\\
&=\frac{n!\e^{F(0)}}{\pi\e} \(F^{(4)}(0)\frac{\Gamma\(5/2\)\sqrt2}{12|F''(0)|^{5/2}}-(F^{(3)}(0))^2\frac{\Gamma\({7/2}\)\sqrt2}{18|F''(0)|^{7/2}}\) \\
&\ \ \ \  -\frac{n!\e^{F(0)}}{\pi\e} \int_{\eps^2{|F''(0)|/2}}^{\infty} \e^{-u}\(F^{(4)}(0)\frac{u^{3/2}\sqrt2}{12|F''(0)|^{5/2}}-(F^{(3)}(0))^2\frac{u^{5/2}\sqrt2}{18|F''(0)|^{7/2}}\)  d  t\\
&=E_n \(\frac{F^{(4)}(0)}{8|F''(0)|^{2}}-\frac{5(F^{(3)}(0))^2}{24|F''(0)|^{3}}\) \\
&\ \ \ \ -\frac{E_n}{\sqrt\pi} \(\frac{F^{(4)}(0)}{6|F''(0)|^{2}}\Gamma\(\frac52, \eps^2{|F''(0)|/2}\)-\frac{(F^{(3)}(0))^2}{9|F''(0)|^{3}}\Gamma\(\frac72, \eps^2{|F''(0)|/2}\)\),
\end{align*}
where $\Gamma(p,x)$ stands for the incomplete Gamma function. From Theorem 2.1 in \cite{BC} we have
$$\Gamma(p,x)\leq \frac{x^p\e^{-x}}{x-p+1},\ \ \ \ p\geq1,x>p-1.$$
 Using this and the equalities $|F^{(k)}(0)|\leq \e^R\(1+\tfrac1R\)$, $R>\sqrt6$, for $k=3$ and $k=4$, we get for $\eps^2{\e^R}>5$ 
\begin{align*}
&\left|\frac{E_n}{\sqrt\pi} \(\frac{F^{(4)}(0)}{6|F''(0)|^{2}}\Gamma\(\frac52, \eps^2{|F''(0)|/2}\)-\frac{(F^{(3)}(0))^2}{9|F''(0)|^{3}}\Gamma\(\frac72, \eps^2{|F''(0)|/2}\)\)\right|\\
&\leq{E_n} \frac{\exp\(-\tfrac12\eps^2{|F''(0)|}\)}{\sqrt\pi\(\eps^2{|F''(0)|/2}-\tfrac52\)}\(1+\tfrac1R\)^{5/2} \(\frac{1}{6}\frac{\eps^{5}\e^{3R/2}}{2^{5/2}}+\frac{1}{9}\frac{\eps^{7}\e^{5R/2}}{2^{7/2}}\)\\
&={E_n} \frac{\exp\(-\tfrac12\eps^2{|F''(0)|}\)}{\eps^2{|F''(0)|}-5}\eps^{7}\e^{5R/2}\frac{(1+\tfrac1R)^{5/2}}{\sqrt\pi}\frac{2}{9\cdot 2^{7/2}}\(\frac{3}{\eps^2\e^R}+1\)\\
&\leq{E_n} \frac{\eps^{7}\exp\(-\tfrac12\eps^2{\e^R}+\tfrac52R\)}{\(\eps^2{\e^R}-5\)}\frac{(1+\tfrac15)^{5/2}}{\sqrt\pi}\frac{1}{9\cdot 2^{5/2}}\(\frac{3}{5}+1\)\\
&\leq{E_n} \frac{\eps^{7}\exp\(-\tfrac12\eps^2{\e^R}+\tfrac52R\)}{30\(\eps^2{\e^R}-5\)}.
\end{align*}
Additionally, 
\begin{align*}
\frac{F^{(4)}(0)}{8|F''(0)|^{2}}-\frac{5(F^{(3)}(0))^2}{24|F''(0)|^{3}}&=\e^{-R}\frac{\(1+\tfrac{6}{R^3}\)}{8\(1+\tfrac1R\)^{2}}-\e^{-R}\frac{5\(1-\tfrac2{R^2}\)^2}{24\(1+\tfrac1R\)^{3}}\\
&=\e^{-R}\(\frac{3\(1+\tfrac{6}{R^3}\)\(1+\tfrac1R\)}{24\(1+\tfrac1R\)^{3}}-\frac{5\(1-\tfrac2{R^2}\)^2}{24\(1+\tfrac1R\)^{3}}\)\\
&=-\frac{\e^{-R}}{12}\frac{1-\tfrac3{2R}-\tfrac{10}{R^2}-\tfrac{9}{R^3}+\tfrac1{R^4}}{\(1+\tfrac1R\)^{3}}.
\end{align*}
\end{proof}

\begin{fact}\label{Fakt2}
	For $0<\eps<\tfrac12$ and $R\geq 5$ it holds
	\begin{equation*}
		\left|\frac{J_2 + J_3}{E_n}\right| \leq {\exp\({\e^R(\cos\eps-1)-{\tfrac12R(1-\eps^2)}}\)}.
	\end{equation*}
\end{fact}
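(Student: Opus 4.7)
The plan is to bound $|J_2+J_3|$ by a (max of integrand)$\times$(length of path) estimate on the corresponding real integrals and then to compare the result with $E_n$. Parameterizing $\Gamma_2$ by $z=R-t+i\eps$ and (by conjugate symmetry) $\Gamma_3$, and substituting $x=R-t$, one gets
\begin{equation*}
|J_2+J_3|\leq \frac{n!}{\pi \e}\int_{\sqrt{R^2-\eps^2}}^R \frac{\e^{\e^x\cos\eps}}{(x^2+\eps^2)^{(n+1)/2}}\,dx.
\end{equation*}
On this interval $x^2+\eps^2\geq R^2$, with equality at the left endpoint. I would then show that the integrand $\varphi(x)$ is nonincreasing on $[\sqrt{R^2-\eps^2},R]$ via its logarithmic derivative $g(x)=\e^x\cos\eps-(n+1)x/(x^2+\eps^2)$: one checks $g'(x)\geq 0$ for $x\geq\eps$, and using $\e^R=(n+1)/R$, $g(R)=(n+1)\bigl(\cos\eps\,(R^2+\eps^2)-R^2\bigr)/\bigl(R(R^2+\eps^2)\bigr)\leq 0$, since $R^2(1-\cos\eps)\geq \eps^2\cos\eps$ follows from $1-\cos\eps\geq \eps^2/2-\eps^4/24$ and $R\geq 5,\ \eps<1/2$. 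Hence $g\leq 0$ throughout, so $\max\varphi=\e^{\e^{\sqrt{R^2-\eps^2}}\cos\eps}/R^{n+1}$.

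Combining with $R-\sqrt{R^2-\eps^2}\leq \eps^2/R$ and dividing by the explicit form of $E_n$ (with $n+1=R\e^R$), this yields
\begin{equation*}
\frac{|J_2+J_3|}{E_n}\leq \frac{\eps^2\,\e^{R/2}\sqrt{2\pi R(R+1)}}{\pi R^2}\cdot \e^{\cos\eps(\e^{\sqrt{R^2-\eps^2}}-\e^R)}.
\end{equation*}
Comparing with the target bound $\exp(\e^R(\cos\eps-1)-R(1-\eps^2)/2)$ and taking logarithms, the remaining task is to verify the scalar inequality
\begin{equation*}
\log\!\left(\frac{\eps^2\,\e^{R/2}\sqrt{2\pi R(R+1)}}{\pi R^2}\right)+\frac{R(1-\eps^2)}{2}\leq \cos\eps\,\bigl(\e^R-\e^{\sqrt{R^2-\eps^2}}\bigr).
\end{equation*}

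To establish this for $R\geq 5$ and $0<\eps<1/2$, I would lower-bound $\e^R-\e^{\sqrt{R^2-\eps^2}}\geq \e^R(1-\e^{-\eps^2/(2R)})\geq \e^R\eps^2/(2R)\cdot(1-\eps^2/(4R))$ and use $\cos\eps\geq \cos(1/2)$. Writing the LHS minus the resulting lower bound of the RHS as a concave function of $u=\eps^2$, its unique maximum (at some $u^*\sim R\e^{-R}$) evaluates to roughly $-\log R+\mathrm{const}$, which is negative for $R\geq 5$. The main obstacle is this last step: the inequality is tight enough that the cruder bound using $\varphi$ at $x=R$ (instead of at $x=\sqrt{R^2-\eps^2}$) is insufficient, so the monotonicity of $\varphi$ is essential; and even at the true maximum $u^*$, the $\log\eps^2$ term must cancel the $R/2$ contribution from $R(1-\eps^2)/2$, leaving only a logarithmic margin.
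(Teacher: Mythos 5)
Your route is genuinely different from the paper's.  After reaching the same real integral $\frac{n!}{\pi\e}\int_{\sqrt{R^2-\eps^2}}^{R}\e^{\e^x\cos\eps}(x^2+\eps^2)^{-(n+1)/2}\,dx$, the paper does \emph{not} argue that the integrand is monotone and use a max-times-length estimate; instead it inserts the factor $\e^{x}/\e^{\sqrt{R^2-\eps^2}}\ge 1$, integrates $\e^{x}\e^{\e^{x}\cos\eps}$ exactly, drops the lower-endpoint term, and then uses $\sqrt{R^2-\eps^2}\ge R(1-\tfrac12\eps^2)$.  This turns the whole thing into a one-line exponential bound and completely avoids the auxiliary scalar inequality your approach requires.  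Your monotonicity argument (via the sign of the log-derivative $g$, checking $g'\ge 0$ for $x\ge\eps$ and $g(R)\le 0$) and the length bound $R-\sqrt{R^2-\eps^2}\le\eps^2/R$ are both correct, and the approach can be made to work — but it pays a price: it generates the prefactor $\eps^2\e^{R/2}\sqrt{2\pi R(R+1)}/(\pi R^2)$ that must then be absorbed by an exponential term of only comparable size.

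There is a genuine gap in the last step.  First, a bookkeeping slip: the exponent in your displayed bound for $|J_2+J_3|/E_n$ should be $\cos\eps\,\e^{\sqrt{R^2-\eps^2}}-\e^R$, not $\cos\eps\bigl(\e^{\sqrt{R^2-\eps^2}}-\e^R\bigr)$ (the missing piece is $-(1-\cos\eps)\e^R$); your subsequent scalar inequality corresponds to the \emph{corrected} exponent, so you have carried the right quantity but misprinted the intermediate step.  More seriously, the stated reasoning for the scalar inequality is wrong on two counts.  The function $f(u)=\log u + R +\tfrac12\log\bigl(2(R+1)/(\pi R^3)\bigr)-Ru/2 - \cos(\tfrac12)\tfrac{\e^R u}{2R}(1-\tfrac{u}{4R})$ is not concave on $(0,1/4)$; it has $f''(u)=-u^{-2}+\cos(\tfrac12)\e^R/(4R^2)$, which changes sign, so it is concave-then-convex (the interior maximum is still unique because $f'$ has a single zero, but the claim of concavity would need to be replaced by this argument).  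And the claimed value of the maximum, ``roughly $-\log R+\mathrm{const}$,'' is incorrect: working through $u^*\approx 2R/(\cos(\tfrac12)\e^R)$ one finds
\begin{equation*}
f(u^*)\;\approx\;\tfrac12\log\tfrac{8}{\pi}-\log\cos\tfrac12-1+\tfrac12\log\bigl(1+\tfrac1R\bigr)\;\approx\;-0.40+\tfrac12\log\bigl(1+\tfrac1R\bigr),
\end{equation*}
i.e.\ a \emph{constant} margin (about $-0.31$ at $R=5$, $\to-0.40$ as $R\to\infty$), not one that improves logarithmically in $R$.  So the scalar inequality is in fact true, but it is tight to within a few tenths and must be verified by an actual estimate of the maximum with explicit constants; the ``logarithmic margin'' heuristic does not establish it.  The paper's exact-integration trick sidesteps this entirely and is the cleaner route.
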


\begin{proof}
	First, let us write $J_2+J_3$ in the following form
	$$J_2+J_3= \frac1{2\pi i}\int_{\sqrt{R^2-\eps^2}}^{R} \frac{\egf{t-i\eps}}{(t-i\eps)^{n+1}} - \frac{\egf{t+i\eps}}{(t+i\eps)^{n+1}}  d  t.$$
	Next, denoting  $\alpha_t=\arg(t+i\eps)$, we get
	\begin{align*}
		\frac{\egf{t-i\eps}}{(t-i\eps)^{n+1}} - \frac{\egf{t+i\eps}}{(t+i\eps)^{n+1}} 
		&= \frac{\exp\({\e^t\cos\eps}\)}{\e(t^2+\eps^2)^{(n+1)/2}} \(\e^{i(-\sin\eps+\alpha_t)}- \e^{i(\sin\eps-\alpha_t)}\)\\\
		&= \frac{\exp\({\e^t\cos\eps}\)}{\e(t^2+\eps^2)^{(n+1)/2}} 2i\sin\(\alpha_t-\sin\eps\).
	\end{align*}
	This gives us
	\begin{align*}
		\left|J_2+J_3\right|
		&= \left|\frac1{2\pi i}\int_{\sqrt{R^2-\eps^2}}^{R} \frac{\egf{t-i\eps}}{(t-i\eps)^{n+1}} - \frac{\egf{t+i\eps}}{(t+i\eps)^{n+1}}  d  t\right|\\
		&\leq \frac{n!}{\pi\e}\int_{\sqrt{R^2-\eps^2}}^{R} \frac{\exp(\e^t\cos(\eps))}{\e(\sqrt{t^2 + \eps^2})^{n+1}}  d  t\\
		&\leq \frac{n!}{\pi \e R^{n+1}} \int_{\sqrt{R^2-\eps^2}}^{R} \frac{\e^t}{\e^{\sqrt{R^2-\eps^2}}}{\exp(\e^t\cos(\eps))}  d  t\\
		&= \frac{n!}{\pi\e R^{n+1}\e^{\sqrt{R^2-\eps^2}}\cos(\eps)} \(\e^{\e^R\cos\eps}-\e^{\e^{\sqrt{R^2-\eps^2}}\cos\eps}\)\\
		&\leq \frac{n!\exp\({\e^R\cos\eps-{R(1-\tfrac12\eps^2)}}\)}{\pi\e R^{n+1}\cos\eps},
	\end{align*}
	where in the last inequality we simply omitted the latter term in the parentheses. 
	Consequently, we may write
	\begin{align*}
	\left|\frac{J_2+J_3}{E_n}\right|&\leq \frac{n!\exp\({\e^R\cos\eps-{R(1-\tfrac12\eps^2)}}\)}{\pi\e R^{n+1}\cos\eps}\frac{R^{n} \sqrt{2\pi(n+1) (R + 1)}}{n!\egf{R}}\\
	&\leq {\exp\({\e^R(\cos\eps-1)-{\tfrac12R(1-\eps^2)}}\)}\frac{ \sqrt{2\pi  ( 1+ \tfrac1R)}}{\pi \cos\eps}.
	\end{align*}  
	Finally, the bound
	\begin{align*}
	\frac{ \sqrt{2\pi  ( 1+ \tfrac1R)}}{\pi \cos\eps}\leq \frac{ \sqrt{2\pi  ( 1+ \tfrac14)}}{\pi \cos(1/2)}\leq1
	\end{align*}
	ends the proof.
\end{proof}

\begin{fact}\label{Fakt1} For $R\geq 4$ and $\varepsilon <1/2$ we have
	\begin{align*}
		\left|\frac{J_4}{E_n}\right| &\leq 3\[\exp\(\e^{R}(\cos(\eps)-1)-\tfrac12R\)+R\exp\(-\tfrac2R\e^{R}+\tfrac12R\)\] .
	\end{align*}
\end{fact}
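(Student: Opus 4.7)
The plan is to parametrize $\Gamma_4$ as $\gamma_4(t)=Re^{it}$, $t\in(\delta,2\pi-\delta)$, and apply the triangle inequality together with $|e^{-int}|=1$ to obtain
\[
|J_4|\leq\frac{n!}{2\pi\e R^n}\int_\delta^{2\pi-\delta}\exp\!\bigl(\Re(e^{Re^{it}})\bigr)\,dt
=\frac{n!}{\pi\e R^n}\int_\delta^{\pi}\exp\!\bigl(e^{R\cos t}\cos(R\sin t)\bigr)\,dt,
\]
where the second equality uses the symmetry $t\mapsto 2\pi-t$. Dividing by $E_n=n!\,\egf{R}/(R^n\sqrt{2\pi(n+1)(R+1)})$ and substituting $n+1=Re^R$ produces an overall factor of order $Re^{R/2}$ in front of $\int_\delta^\pi\exp\!\bigl(\Re(e^{Re^{it}})-e^R\bigr)\,dt$.

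To estimate this integral, I split $[\delta,\pi]$ into $A=\{t:\cos(R\sin t)\leq\cos\eps\}$ and $B=[\delta,\pi]\setminus A$. On $A$ one has $\Re(e^{Re^{it}})\leq \e^R\cos\eps$, hence $\Re(e^{Re^{it}})-\e^R\leq \e^R(\cos\eps-1)$. A Laplace-type bound at the boundary maximum $t=\delta$ yields
\[
\int_A\exp\!\bigl(\Re(e^{Re^{it}})-\e^R\bigr)\,dt\;\lesssim\;\frac{\exp(\e^R(\cos\eps-1))}{|g'(\delta)|},\qquad g(t)=\Re(e^{Re^{it}})-\e^R,
\]
and a direct computation gives $g'(\delta)=-Re^{\sqrt{R^2-\eps^2}}\sin(\delta+\eps)$. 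Using $\sin(\delta+\eps)\geq\sin\eps$ and the hypothesis $\eps<1/2$, this denominator absorbs the $Re^{R/2}$ prefactor and produces the first summand $3\exp\!\bigl(\e^R(\cos\eps-1)-\tfrac12 R\bigr)$.

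For the second summand, observe that $B$ consists of at most $\lfloor R/(2\pi)\rfloor$ short arcs near points where $R\sin t=2\pi k$, $k\geq 1$. On such an arc $R\cos t\leq\sqrt{R^2-(2\pi-\eps)^2}$, which under the hypotheses $R\geq 4$ and $\eps<1/2$ is bounded by $R+\log(1-2/R)$; consequently $\Re(e^{Re^{it}})-\e^R\leq -2\e^R/R$. A crude length bound on $B$ combined with the prefactor produces the second summand $3R\exp\!\bigl(-\tfrac{2}{R}\e^R+\tfrac12 R\bigr)$. The principal obstacle is the Laplace estimate on $A$: since $\Re(e^{Re^{it}})$ is non-monotone (the oscillating factor $\cos(R\sin t)$ admits secondary peaks where $R\sin t\in 2\pi\mathbb{Z}_{>0}$), the argument must either restrict to the connected component of $A$ containing $\delta$ or exploit the fact that those secondary peaks lie inside $B$, where the stronger $-2\e^R/R$ bound already applies. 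Verifying $\sqrt{R^2-(2\pi-\eps)^2}\leq R+\log(1-2/R)$ throughout $R\geq 4,\,\eps<1/2$ reduces, after squaring, to a routine one-variable inequality checked most tightly at the boundary $R=4$.
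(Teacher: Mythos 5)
Your decomposition is genuinely different from the paper's, and it leaves two substantive gaps. The paper cuts $[\delta,\pi]$ into three explicit $t$-intervals $(\delta,\tfrac{6}{5R})$, $(\tfrac{6}{5R},\tfrac{4}{R})$, $(\tfrac{4}{R},\pi)$. On the first, it bounds $\e^{R\cos t}\cos(R\sin t)\leq\e^R\cos(R\sin t)$ and then multiplies and divides the integrand by $\e^R\sin(R\sin t)R\cos t$, i.e.\ by (minus) the derivative of $\exp(\e^R\cos(R\sin t))$, and integrates exactly; on the middle piece $\cos(R\sin t)\leq\tfrac12$ since $R\sin(\tfrac{6}{5R})>\pi/3$; on the last piece $\cos(4/R)\leq 1-\tfrac{4}{R^2}$ gives $\e^{R\cos t}\leq\e^{R-4/R}$. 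Every estimate is on an explicit interval with a monotone or bounded integrand. You instead split by the value of $\cos(R\sin t)$ into $A=\{\cos(R\sin t)\leq\cos\eps\}$ and $B$.

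First gap: your Laplace estimate on $A$ is asserted, not proved. $A$ is a disjoint union of intervals, not an interval, and the bound $\int_A \e^{g}\,dt\lesssim \e^{g(\delta)}/|g'(\delta)|$ needs the maximum of $g$ on $A$ to sit at $\delta$ \emph{and} a monotonicity/concavity input to control the decay; you flag this non-monotonicity explicitly as ``the principal obstacle'' and then leave it open. The paper avoids this entirely by working on a fixed short interval $(\delta,\tfrac{6}{5R})$, where the derivative trick is an identity plus a pointwise comparison, not a Laplace heuristic; the secondary peaks never enter because $(\delta,\tfrac{6}{5R})$ is far from them.

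Second gap: your description and estimate of $B$ fail on part of the stated range. $B$ is $\{\cos(R\sin t)>\cos\eps\}\cap[\delta,\pi]$, which always contains a neighbourhood of $t=\pi$ (there $R\sin t<\eps$), not only arcs near $R\sin t=2\pi k$ with $k\geq1$; for $4\leq R<2\pi-\eps$ you then have $\lfloor R/(2\pi)\rfloor=0$ ``arcs'' but $B\neq\emptyset$, so the claimed bound on $B$ covers nothing. Worse, the inequality $\sqrt{R^2-(2\pi-\eps)^2}\leq R+\log(1-2/R)$ is meaningless for $R<2\pi-\eps$ (left side imaginary), and since $2\pi-\eps>2\pi-\tfrac12\approx5.78$ for all $\eps<\tfrac12$, the range $R\in[4,\,2\pi-\tfrac12)$ is excluded; in particular the assertion that it is ``checked most tightly at the boundary $R=4$'' is false because the formula is undefined there. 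Both gaps can probably be repaired (treat the near-$\pi$ component of $B$ separately, and restrict the arc estimate to $R\geq 2\pi$, say), but as written the proof does not establish the claim for $R\geq4$. The paper's interval-based split is structurally simpler precisely because its three pieces are intervals with explicit endpoints independent of the fine structure of $\cos(R\sin t)$.
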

\begin{proof}
Let us rewrite 
\begin{align*}
\left|J_4\right|&=\left|\frac{n!}{2\pi i}\int_{\delta}^{2\pi - \delta} \frac{\egf{R\e^{it}}}{\left(R\e^{it}\right)^{n+1}} Ri\e^{it}  d  t\right|=\frac{n!}{2\pi\e R^n}\left|\int_{\delta}^{2\pi - \delta} {\exp\(\e^{R\e^{it}}\)} \e^{-int}  d  t\right|\\
&\leq\frac{n!}{2\pi \e R^n}\int_{\delta}^{2\pi - \delta}\left|{\exp\(\e^{R\e^{it}}\)} \e^{-int}\right| d  t\\
&=\frac{n!}{\pi \e R^n}\int_{\delta}^{\pi }{\exp\(\e^{R\cos t}\cos(R\sin t)\)} d  t,
\end{align*}
where $\delta = \arcsin\left(\frac{\eps}{R}\right)$. Now we split the last integral into three ones over the intervals $(\delta,\tfrac6{5R})$, $(\tfrac6{5R},\tfrac4R)$ and $(\tfrac4R,\pi)$. 
	\begin{align*}
J_{4,1}&:=\left|\frac{n!}{\pi \e R^n}\int_{\delta}^{6/5R} \exp\(\e^{R\cos t}\cos(R\sin t)\)  d  t\right|\\
	&\leq\frac{n!}{\pi \e R^n}\int_{\delta}^{6/5R}\frac{\e^{R}\sin(R\sin t)R\cos t}{\e^{R}\sin\(R\sin\delta\)R\cos \tfrac6{5R}} \exp\(\e^{R}\cos(R\sin t)\)  d  t\\
		&\leq\frac{n!}{\pi \e R^{n+1}\e^{R}\sin\(\eps\)\cos \tfrac6{5R}} \[-\exp\(\e^{R}\cos(R\sin t)\)\]_{t=\delta}^{t=6/5R}\\
			&\leq\frac{n!}{\pi \e R^{n+1}\(\eps(1-\tfrac{\eps^2}6)\)\cos \tfrac6{5R}}\exp\(\e^{R}\cos(\eps)-R\)\\
				&\leq\frac52\frac{n!}{\pi \e R^{n+1}}\exp\(\e^{R}\cos(\eps)-R\).
\end{align*}
where we used $\sin x\geq x(1-x^2/6)$, $0\leq x\leq 1$. Next, 
	\begin{align*}
J_{4,2}&:=\frac{n!}{\pi \e R^n}\int_{6/5R}^{4/R } \exp\(\e^{R\cos (t)}\cos(R\sin(t))\)  d  t\\
&\leq \frac{3n!}{\pi \e R^{n+1}} \exp\(\e^{R}\cos(R\sin(6/5R))\) \\
&\leq \frac{3n!}{\pi \e R^{n+1}} \exp\(\e^{R}\cos\(\tfrac65-\tfrac6{25R^2}\)\) \\
&\leq \frac{3n!}{\pi \e R^{n+1}} \exp\(\tfrac12\e^{R}\),
\end{align*}
where in the last inequality we used 
$$\frac65-\frac6{25R^2}\in\(\frac65-\frac{1}{400},\frac65\)\subseteq\(\frac{\pi}3,\frac65\) .$$
Thus, using the inequality $\cos x\geq 7/8$, $|x|\leq 1/2$, we get 
\begin{align*}
J_{4,1}+J_{4,2}&\leq \frac52\frac{n!}{\pi \e R^{n+1}}\exp\(\e^{R}\cos(\eps)-R\)\(1+\frac65 \exp\(\e^{R}(\tfrac12-\cos(\varepsilon))+R\)\)\\
&\leq \frac52\frac{n!}{\pi \e R^{n+1}}\exp\(\e^{R}\cos(\eps)-R\)\(1+\frac65 \exp\(-\tfrac38\e^{R}+R\)\)\\
&\leq \frac{n!}{ \e R^{n+1}}\exp\(\e^{R}\cos(\eps)-R\).
\end{align*}

Eventually, by the inequalities $\cos(x) \leq 1 - \frac{1}{4}x^2$ and $1-\e^{-x}\geq \frac12x$, $0\leq x\leq1$, we obtain
\begin{align*}
J_{4,3}&:=\frac{n!}{\pi \e R^n}\int_{4/R}^{\pi }{\exp\(\e^{R\cos t}\cos(R\sin t)\)} d  t\leq \frac{n!}{\pi \e R^n}\int_{4/R}^{\pi } \exp\(\e^{R\cos (4/R)}\)  d  t\\
&\leq   \frac{n!}{\e R^n}\exp\(\e^{R-4/R}\)=\frac{n!}{\e R^n}\exp\(\e^R-\e^R(1-\e^{-4/R})\)\leq \frac{n!}{\e R^n}\exp\(\e^R(1-\tfrac2R)\).
\end{align*}
Summing up, we arrive at
\begin{align*}
\left|\frac{J_4}{E_n}\right| &\leq \(J_{4,1}+J_{4,2}+J_{4,3}\)\frac{R^{n} \sqrt{2\pi(n+1) (R + 1)}}{n!\egf{R}}\\
&\leq \(\exp\(e^{R}(\cos(\eps)-1)-R\)+R\exp\(-\tfrac2R\e^{R}\)\)\frac1R \sqrt{2\pi R\e^R (R + 1)}\\
&\leq 3\[\exp\(\e^{R}(\cos(\eps)-1)-\tfrac12R\)+R\exp\(-\tfrac2R\e^{R}+\tfrac12R\)\] ,
\end{align*}
where be bounded $\sqrt{2\pi(1+\tfrac1R)}\leq \sqrt{2\pi(1+\tfrac14)}\leq3$. This ends the proof.
\end{proof}
	\begin{corollary}\label{cor:JJJ}
	 For $R\geq 5$ and $\varepsilon <1/2$ we have
	\begin{align*}
		\left|\frac{J_2+J_3+J_4}{E_n}\right| &\leq 4\exp\(-\tfrac{11}{24}\eps^2\e^{R}-\tfrac38R\)+3R\exp\(-\tfrac2R\e^{R}+\tfrac12R\) .
	\end{align*}
	\end{corollary}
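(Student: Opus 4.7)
The plan is to combine Propositions~\ref{Fakt2} and~\ref{Fakt1} by the triangle inequality. Three exponential terms appear in total: two from Fakt1 (one involving $\cos\varepsilon$, one involving $-\tfrac{2}{R}\e^R$) and one from Fakt2. The term $3R\exp(-\tfrac{2}{R}\e^R+\tfrac12 R)$ from Fakt1 is identical to the second summand in the claimed bound, so it passes through unchanged. What remains is to verify the scalar inequality
$$
\exp\!\Bigl(\e^R(\cos\varepsilon-1)-\tfrac12 R(1-\varepsilon^2)\Bigr) + 3\exp\!\Bigl(\e^R(\cos\varepsilon-1)-\tfrac12 R\Bigr) \leq 4\exp\!\Bigl(-\tfrac{11}{24}\varepsilon^2\e^R-\tfrac{3}{8}R\Bigr).
$$

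The strategy is to show that, after dividing by the right-hand side, each of the two quotients is at most $1/4$ and $3/4$ respectively, so their sum is at most $1$. After simplification, the two exponents become
$$
\e^R\!\Bigl(\cos\varepsilon-1+\tfrac{11}{24}\varepsilon^2\Bigr)+R\!\Bigl(\tfrac{\varepsilon^2}{2}-\tfrac{1}{8}\Bigr), \qquad \e^R\!\Bigl(\cos\varepsilon-1+\tfrac{11}{24}\varepsilon^2\Bigr)-\tfrac{R}{8}.
$$
Both should be nonpositive. The linear-in-$R$ parts are clearly $\leq 0$ since $\varepsilon<1/2$ and $R\geq 5$. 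For the coefficient of $\e^R$, I would invoke the alternating-series upper bound $\cos\varepsilon\leq 1-\varepsilon^2/2+\varepsilon^4/24$, valid for $\varepsilon\leq 1$, which yields
$$
\cos\varepsilon-1+\tfrac{11}{24}\varepsilon^2 \leq -\tfrac{\varepsilon^2}{24}+\tfrac{\varepsilon^4}{24}\leq 0.
$$
This finishes the proof.

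There is no genuine obstacle here; the corollary is essentially a repackaging of the two preceding propositions into a form convenient for later use. The only step that demands a bit of thought is the choice of the split constants $\tfrac{11}{24}$ and $\tfrac38$ appearing in the target bound. The value $\tfrac{11}{24}$ is dictated by the identity $\tfrac12-\tfrac{11}{24}=\tfrac{1}{24}$, which matches exactly the leading Taylor coefficient of $\cos\varepsilon-1$, ensuring the $\e^R$-coefficient stays nonpositive. Similarly, $\tfrac38=\tfrac12-\tfrac18$ is calibrated so that the residual linear-in-$R$ term $\tfrac{\varepsilon^2}{2}-\tfrac18$ remains nonpositive throughout the range $\varepsilon<1/2$.
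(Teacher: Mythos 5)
Your proof is correct and follows essentially the same route as the paper: both bound the $J_2+J_3$ and $J_{4,1}+J_{4,2}$ contributions via the Taylor estimate $1-\cos\eps\geq\tfrac12\eps^2-\tfrac1{24}\eps^4\geq\tfrac{11}{24}\eps^2$ (valid for $\eps\leq1$), and both absorb the residual linear-in-$R$ factors into $-\tfrac38R$ using $\eps<\tfrac12$. Your presentation of dividing through by the target bound and checking each quotient separately is a cosmetic reorganization of the same computation.
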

\begin{proof}
The assertion follows from Propositions \ref{Fakt2} and \ref{Fakt1} and the inequality 
$$1-\cos x\geq \frac12x^2-\frac1{4!}\leq \frac12x^2\(1-\frac1{12}x^2\)\leq \frac{11}{24}x^2,$$
valid for $|x|\leq1$.
\end{proof}

\section{Explicit bounds of the  Bell numbers}
We start this section  with some kind of master theorem, as it implies all the subsequent result. It establishes  a second order asymptotic of the Bell numbers together with the convergence rate. This provides us with the optimal order of the error in estimates,  allows to reach good constants in the first order bounds, and is crucial in approximating the ratio of the consecutive  Bell numbers.
\begin{theorem}\label{thm:master} For $n \geq 1$ it holds
\begin{equation*}
	\left|\frac{B_n}{E_n}-\(1-\frac{\e^{-R}}{12}\frac{1-\tfrac{3}{2R}-\tfrac{10}{R^2}-\tfrac{9}{R^3}+\tfrac1{R^4}}{\(1+\tfrac1R\)^{3}}\)\right|\leq 1.6\e^{-2R},
\end{equation*}
where $R=W(n+1)$.
\end{theorem}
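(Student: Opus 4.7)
The strategy is to combine the contour decomposition $B_n = J_1 + J_2 + J_3 + J_4$ from equation (3.3) with Proposition \ref{prop:J_1} and Corollary \ref{cor:JJJ}. Observe that the expression subtracted from $J_1/E_n$ in Proposition \ref{prop:J_1} is exactly the quantity $q_n = Q(R)$ appearing in the theorem. Hence, for any admissible $\eps$ (namely $\eps<1/2$ and $\eps^{2}\e^{R}>5$) and any $R\geq 5$, the triangle inequality yields
\begin{align*}
\left|\frac{B_n}{E_n}-q_n\right|
&\leq \sqrt{\tfrac{2}{\pi}}\tfrac{1}{\eps}\exp\bigl(-\tfrac12\eps^{2}\e^{R}-\tfrac12 R\bigr)
+\tfrac{6}{5}\exp\bigl(\tfrac{\eps^{4}\e^{R}}{22}-2R\bigr)\\
&\quad +\tfrac{\eps^{7}\exp(-\tfrac12\eps^{2}\e^{R}+\tfrac52 R)}{30(\eps^{2}\e^{R}-5)}
+4\exp\bigl(-\tfrac{11}{24}\eps^{2}\e^{R}-\tfrac{3}{8}R\bigr)
+3R\exp\bigl(-\tfrac{2}{R}\e^{R}+\tfrac12 R\bigr),
\end{align*}
and the task reduces to choosing $\eps$ so that the right-hand side is at most $1.6\,\e^{-2R}$.

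The second summand is the bottleneck: it equals at least $\tfrac{6}{5}\e^{-2R}$ for any choice of $\eps$, which exhausts most of the $1.6\,\e^{-2R}$ budget and forces $\eps^{4}\e^{R}$ to be small. On the other hand, the first, third and fourth summands require $\eps^{2}\e^{R}$ to be large. I would therefore parameterize $\eps$ by $\eps^{2}\e^{R}=\alpha R$, equivalently $\eps=\sqrt{\alpha R}\,\e^{-R/2}$, with a constant $\alpha$ (around $4$) to be calibrated. For $R\geq 5$ this choice automatically satisfies $\eps<1/2$ and $\eps^{2}\e^{R}>5$. Substitution gives $\eps^{4}\e^{R}=\alpha^{2}R^{2}\e^{-R}$, so the leading term becomes $\tfrac{6}{5}\exp(\alpha^{2}R^{2}/(22\e^{R})-2R)$, which is bounded by $\tfrac{6}{5}\e^{-2R}\exp(\alpha^{2}R^{2}/(22\e^{R}))$, decaying to $\tfrac{6}{5}\e^{-2R}$ as $R\to\infty$. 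Simultaneously, the first summand is of order $\e^{-R/2}\e^{-(\alpha/2+1/2)R}/\sqrt{\alpha R}$, the third is polynomial in $R$ times $\e^{-(\alpha/2+1)R}$, the fourth is $4\exp(-(11\alpha/24+3/8)R)$, and the fifth is super-exponentially small. With $\alpha=4$, each of these four decays strictly faster than $\e^{-2R}$; their sum at $R=5$ accounts for well under $0.4\,\e^{-2R}$, leaving room for the overshoot in the second term. This proves the theorem for $R\geq 5$, that is, for $n\geq 742$.

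The remaining range $n\leq 741$ (equivalently $W(n+1)<5$) is handled by direct numerical verification: the values $B_n$, $E_n$ and $Q(W(n+1))$ are computed and the claimed inequality checked term by term. As announced in the introduction, this calculation is routine on an ordinary computer and the Python script is provided in the Appendix. The principal obstacle, therefore, is not the asymptotic analysis but the bookkeeping of absolute constants: after the unavoidable contribution $\tfrac{6}{5}\e^{-2R}$ from Proposition \ref{prop:J_1} only a margin of $0.4\,\e^{-2R}$ is left, so every $\sqrt{2\pi}$, every $\Gamma$-value and every monotonicity check at $R=5$ must be carried out carefully to keep the remaining four summands within budget. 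Fine-tuning $\alpha$ (and possibly letting $\alpha$ depend mildly on $R$) gives sufficient slack to close the argument at the threshold $R=5$ while retaining it for all larger $R$.
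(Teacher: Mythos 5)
Your overall plan is the same as the paper's: assemble $B_n/E_n$ from Proposition~\ref{prop:J_1} and Corollary~\ref{cor:JJJ}, observe that the subtracted expression in Proposition~\ref{prop:J_1} is exactly $q_n=Q(R)$, choose an $\eps=\eps(R)$, bound the five error terms for $R\geq5$, and verify $n\leq 741$ numerically. That much is right. The problem is the specific choice of $\eps$ and the numerical claim attached to it.

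You set $\eps^2\e^R=\alpha R$ with $\alpha\approx 4$ and assert that for $\alpha=4$ the first, third, fourth and fifth summands together contribute ``well under $0.4\,\e^{-2R}$'' at $R=5$. This is false. With $\eps^2\e^R=20$ at $R=5$, the fourth summand is $4\exp\bigl(-\tfrac{11}{24}\cdot20-\tfrac38\cdot5\bigr)=4\e^{-11.04}\approx 1.41\,\e^{-10}$, i.e.\ roughly $1.41\,\e^{-2R}$ --- by itself already close to the entire budget. The third summand contributes about $0.54\,\e^{-2R}$, the first about $0.18\,\e^{-2R}$, and the second is $\tfrac65\exp\bigl(16R^2\e^{-R}/22\bigr)\e^{-2R}\approx 1.36\,\e^{-2R}$; the total is about $3.5\,\e^{-2R}$, more than twice the target. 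Pushing $\alpha$ up reduces the first, third and fourth terms but inflates $\eps^4\e^R=\alpha^2R^2\e^{-R}$ and hence the dominant second term; pushing $\alpha$ down does the reverse. Even at the best compromise ($\alpha\approx5.5$, where $\alpha R\approx27.5$ coincides numerically with the paper's value of $\eps^2\e^R$ at $R=5$) the sum hovers around $1.58\,\e^{-2R}$, with essentially no slack, and for $\alpha$ much larger the constraint $\eps<1/2$ fails at $R=5$. Your parameterization thus does not deliver the claimed margin, and the statement ``each of these four decays strictly faster than $\e^{-2R}$'' is also misleading: with $\alpha=4$ the fourth summand decays only like $\e^{-5R/24}$ relative to $\e^{-2R}$, far too slowly to be negligible at the threshold.

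The missing idea is the scaling. The paper takes $\eps=\tfrac32\e^{-R/4}$, so that $\eps^4\e^R=\tfrac{81}{16}$ is an absolute constant --- this pins the bottleneck second term at exactly $\tfrac65\e^{81/352}\e^{-2R}\approx1.51\,\e^{-2R}$ for all $R$ --- while $\eps^2\e^R=\tfrac94\e^{R/2}$ grows exponentially, killing the first, third and fourth summands super-exponentially. The expressions in brackets are then shown to be decreasing in $R$, so evaluating at $R=5$ gives $1.55\,\e^{-2R}+0.05\,\e^{-2R}=1.6\,\e^{-2R}$. Your $\eps\propto\sqrt{R}\,\e^{-R/2}$ makes $\eps^2\e^R$ grow only linearly, which is the wrong growth regime and is the root cause of the failed constants. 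To repair the argument you would need to adopt the $\e^{-R/4}$ scaling (or some other choice with $\eps^2\e^R$ growing exponentially and $\eps^4\e^R$ bounded) and then actually carry out the monotonicity-plus-evaluation at $R=5$ that you only gesture at.
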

\begin{remark}The proof of Theorem \ref{thm:master} relies on Proposition \ref{prop:J_1}, Corollary \ref{cor:JJJ} and proper choice of $\varepsilon$. In view of the middle term in \eqref{prop1}, the bound of order $\e^{-2R}$ is the best possible to obtain in this approach. Additionally, one requires $\varepsilon=\varepsilon(R)\leq C\e^{-R/4}$, for some $C>0$, in order to achieve this order.  Numerical analysis shows that, fixing $R=5$, lies around $1.4$. For simplicity, we choose $C=3/2$, as is does not impact much the final result.
\end{remark}
\begin{proof}[Proof of Theorem \ref{thm:master}]
We bound from above the left-hand side of the assertion by
\begin{align*}
	&\left|\frac{J_1}{E_n}-\(1-\frac{\e^{-R}}{12}\frac{1-\tfrac{3}{2R}-\tfrac{10}{R^2}-\tfrac{9}{R^3}+\tfrac1{R^4}}{\(1+\tfrac1R\)^{3}}\)\right|+\left|\frac{J_2+J_3+J_4}{E_n}\right|.
\end{align*}
Taking $\eps=\frac32 \e^{-R/4}$ we have $\eps^2{\e^R}>5$, and Proposition \ref{prop:J_1} gives us
\begin{align*}
	&\left|\frac{J_1}{E_n}-\(1-\frac{\e^{-R}}{12}\frac{1-\tfrac{3}{2R}-\tfrac{10}{R^2}-\tfrac{9}{R^3}+\tfrac1{R^4}}{\(1+\tfrac1R\)^{3}}\)\right|\\
	&\leq \e^{-2R} \left[\frac{2\sqrt{2}}{3\sqrt{\pi}} \exp\left(-\frac98 \e^\frac{R}{2} + \frac74 R\right) + \frac{6}{5}\e^{\frac{81}{352}}  + \frac{729}{1280} \frac{\exp\left(-\frac98 \e^{\frac{R}{2}} + \frac{11}4R\right)}{\frac94\e^{\frac{R}{2}} - 5}\right].
\end{align*}
The function of $R$ in the brackets is decreasing, we therefore bound it from above by the value at $R=5$ and get
\begin{equation*}
	\left|\frac{J_1}{E_n}-\(1-\frac{\e^{-R}}{12}\frac{1-\tfrac{3}{2R}-\tfrac{10}{R^2}-\tfrac{9}{R^3}+\tfrac1{R^4}}{\(1+\tfrac1R\)^{3}}\)\right|\leq 1.55\e^{-2R}.
\end{equation*}
Next, we employ Corollary \ref{cor:JJJ}. Since $\varepsilon<1/2$ for $R\geq5$, we have
\begin{align*}
		\left|\frac{J_2+J_3+J_4}{E_n}\right| &\leq \[4\exp\(-\tfrac{33}{32}\e^{R/2}+\tfrac{13}8R\)+3R\exp\(-\tfrac2R\e^{R}+\tfrac52R\)\]\e^{-2R} .
	\end{align*}
	Similarly as previously, it is easy to show that the expression in the  brackets is decreasing, and therefore
	\begin{align*}
		\left|\frac{J_4}{E_n}\right|
	 &\leq  \[4\exp\(-\tfrac{33}{32}\e^{5/2}+\tfrac{65}8\)+15\exp\(-\tfrac25\e^{5}+\tfrac{25}2\)\]e^{-2R}\\
	 &\leq 0.05\e^{-2R}.
	\end{align*}
	Combining the results we get the required bound for $R \geq 5$, which corresponds to $n \geq \lceil5\e^{5} - 1\rceil = 742$. Numerical calculations confirm the bound holds for $n \in\{ 1, \ldots, 741\}$ as well.
\end{proof}
As a first consequence of Theorem \ref{thm:master} we derive some  weaker, but more elegant and  convenient in applications, bounds on the ratio $B_n/E_n$.
\begin{prop}\label{prop:main} For $n\geq11$ we have 
	\begin{align*}
		\left|\frac{B_n}{E_n}-1\right|&\leq \frac{\e^{-W(n+1)}}{11}\leq \frac1{11}\frac{\ln n}{n}.
	\end{align*}
	Furthermore, for $n\geq311$ we have $B_n\leq E_n.$
\end{prop}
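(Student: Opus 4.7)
The plan is to bootstrap Theorem~\ref{thm:master}, which states $|B_n/E_n - q_n| \leq 1.6\, \e^{-2R}$ with $R = W(n+1)$ and $q_n$ the sequence analysed in Section~2. The first bound should follow by the triangle inequality:
$$\left|\frac{B_n}{E_n} - 1\right| \leq |1 - q_n| + 1.6\, \e^{-2R}.$$
For $R \geq 5$, the bound \eqref{eq:q0} gives $0 \leq 1 - q_n \leq \e^{-R}/12$, so the right-hand side is at most $\e^{-R}/12 + 1.6\, \e^{-2R}$. This does not exceed $\e^{-R}/11$ precisely when $1.6\, \e^{-R} \leq 1/132$, i.e., $R \geq \ln(211.2) \approx 5.354$. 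Translated into $n$, this is a condition of the form $n \geq n_0$ with an explicit moderate $n_0 \approx 1130$; the finitely many remaining indices $11 \leq n \leq n_0$ will be verified numerically using the Python routine in the appendix.

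For the second, elementary, estimate $\e^{-W(n+1)} \leq \ln n/n$, I would use \eqref{eq:eW=x/W} to rewrite $\e^{-W(n+1)} = W(n+1)/(n+1)$, then bound $W(n+1) \leq \ln(n+1)$ via \eqref{eq:Wbounds0}, and finally exploit the monotonicity of $x \mapsto (\ln x)/x$ on $[\e,\infty)$ to conclude $\ln(n+1)/(n+1) \leq \ln n/n$, valid for $n \geq 3$.

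For the final assertion $B_n \leq E_n$ when $n \geq 311$, Theorem~\ref{thm:master} combined with \eqref{eq:q1} immediately yields
$$\frac{B_n}{E_n} \leq q_n + 1.6\, \e^{-2R} \leq 1$$
as soon as $R \geq 5$, which covers $n \geq 742$; the remaining range $311 \leq n \leq 741$ is again dispatched by computing $B_n$ and $E_n$ directly.

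The main obstacle is not the structural argument, which reduces to a triangle inequality and the already-established lemma on $q_n$, but rather the fact that the constant $1/12$ appearing in Theorem~\ref{thm:master} is uncomfortably close to the target $1/11$. This forces a non-trivial threshold $n_0$ in the analytical portion, so that numerical verification has to cover $n$ up to roughly a thousand. Fortunately, the Bell numbers in that range are well within the reach of a standard computer, in line with the paper's overall strategy of balancing analytic estimates against finite machine checks.
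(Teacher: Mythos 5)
Your proposal follows the same route as the paper: triangle inequality from Theorem~\ref{thm:master}, the bound $|1-q_n|\leq\e^{-R}/12$ from \eqref{eq:q0}, the elementary chain $\e^{-W(n+1)}=W(n+1)/(n+1)\leq\ln(n+1)/(n+1)\leq\ln n/n$, and \eqref{eq:q1} for $B_n\leq E_n$. Your concern about the constant is well placed: with $1.6\e^{-2R}$ and the crude bound $|1-q_n|\leq\e^{-R}/12$, the inequality $\tfrac1{12}+1.6\e^{-R}\leq\tfrac1{11}$ only holds once $\e^R\geq211.2$, i.e.\ $R\geq5.35$ and $n\geq1130$; note that the paper's displayed step $\tfrac1{12}+\tfrac32\e^{-5}\leq\tfrac1{11}$ is in fact false ($0.0934>0.0909$), so its analytic case as written does not start at $R=5$ either. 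Your extended numerical range is a correct fix; a cheaper repair that restores the threshold $R\geq5$ is the sharper $|1-q_n|\leq\tfrac{\e^{-R}}{12}\bigl(1-\tfrac3{2R}\bigr)$ (valid because $\tfrac{10}{R^2}+\tfrac9{R^3}>\tfrac1{R^4}$ and $(1+\tfrac1R)^3\geq1$), which reduces the required inequality to $1.6\e^{-R}\leq\tfrac1{132}+\tfrac1{8R}$, satisfied for all $R\geq5$ since $12.8\,R\e^{-R}\leq12.8\cdot5\e^{-5}<1$.
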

\begin{proof} Similarly as in \eqref{aux1}, from Theorem \ref{thm:master} we get for $R\geq5$
	\begin{align*}
		\left|\frac{B_n}{E_n}-1\right|&\leq \e^{-R}\left|- \frac{1-\tfrac{3}{2R}-\tfrac{10}{R^2}-\tfrac{9}{R^3}+\tfrac1{R^4}}{12\(1+\tfrac1R\)^{3}}\right|+\frac32\e^{-2R}\\
		&\leq \e^{-R}\(\frac{1}{12} + \frac32 \e^{-5}\)\leq \frac{\e^{-R}}{11}\leq \frac1{11}\frac{\ln (n+1)}{n+1}\leq \frac1{11}\frac{\ln n}{n},
	\end{align*}
	where we used $\tfrac{3}{2\cdot 5}+\tfrac{10}{5^2}+\tfrac{9}{5^3}<1$. Since $R \geq5 $ corresponds to $n \geq  742$, we confirm the proposition numerically for $n = 11, \ldots, 741$.
	
	The bound $B_n\leq E_n$ for $n\geq 742$ is a consequence of  Theorem \ref{thm:master} and the inequality \eqref{eq:q1}. Numerical calculations verify the bound for $n = 311, \ldots 742$.
\end{proof}
So far, the results are formulated by means of the asymptotic form $E_n$. However, our preferred form is $E^*$, since it allows to establish bounds for greater range of the indices, and is more convenient in some calculations. Below, we present relations between these two quantities.

\begin{lemma}\label{lem:EE}For $n\geq 1$ it holds
	$$ \left(1 - \frac1{2n}\right)E_n^*\leq E_n\leq E_n^*.$$
\end{lemma}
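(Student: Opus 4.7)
The approach is to rewrite $E_n$ in a form that directly mirrors $E_n^*$, reducing their ratio to a product of simple factors. I will apply Stirling's formula in the form
\[
(n+1)!=\sqrt{2\pi(n+1)}\!\left(\frac{n+1}{\e}\right)^{\!n+1}\!\e^{r_{n+1}},\qquad 0<r_{n+1}<\tfrac{1}{12(n+1)}.
\]
Combined with $R\e^R=n+1$ (which yields $R^n=(n+1)^n\e^{-nR}$ via $\log R=\log(n+1)-R$), and with the integral formula \eqref{eq:intW} rewritten as $\e^R-1=\phi(n+1)-(n+1)R+n+1$, where $\phi(x):=\int_0^x W(s)\,ds$, the factorial, the power $R^n$, and the Gaussian $\sqrt{2\pi(n+1)}$ all cancel, leaving
\[
E_n=\frac{\exp\bigl(\phi(n+1)-R+r_{n+1}\bigr)}{\sqrt{1+R}}.
\]
Since $E_n^*=\e^{\phi(n)}/\sqrt{1+\widetilde R}$ with $\widetilde R:=W(n)$, and $\phi(n+1)-\phi(n)=\int_n^{n+1}W(s)\,ds$, the ratio becomes
\[
\frac{E_n}{E_n^*}=\sqrt{\frac{1+\widetilde R}{1+R}}\cdot\e^{r_{n+1}}\cdot\exp\!\left(\int_n^{n+1}W(s)\,ds-R\right).
\]

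For the upper bound $E_n\le E_n^*$, I will exploit that $W$ is concave, so $W'$ is decreasing; hence $R-\int_n^{n+1}W(s)\,ds\ge\tfrac12 W'(n+1)=\tfrac{R}{2(n+1)(R+1)}$. Together with $\sqrt{(1+\widetilde R)/(1+R)}\le 1$ and $r_{n+1}\le\tfrac{1}{12(n+1)}$, this gives $\log(E_n/E_n^*)\le\tfrac{1}{n+1}\bigl(\tfrac{1}{12}-\tfrac{R}{2(R+1)}\bigr)$, which is non-positive once $R\ge\tfrac15$---trivially true for $n\ge 1$ since $R\ge W(2)\approx 0.85$.

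For the lower bound, I will use $r_{n+1}\ge 0$ and the trapezoid estimate $\int_n^{n+1}W(s)\,ds\ge(\widetilde R+R)/2$ (also from concavity, the chord lying below the graph) to reduce the problem to bounding $(1+\widetilde R)\e^{-\Delta}/(1+R)$ from below by $(1-1/(2n))^2$, where $\Delta:=R-\widetilde R$. Using $(1+\widetilde R)/(1+R)=1/(1+\Delta/(1+\widetilde R))\ge\e^{-\Delta/(1+\widetilde R)}$ (from $1+x\le\e^x$), together with the concavity bound $\Delta\le W'(n)=\widetilde R/(n(1+\widetilde R))$ from \eqref{eq:aux7}, a short calculation yields $\Delta(2+\widetilde R)/(1+\widetilde R)\le\bigl(1-1/(1+\widetilde R)^2\bigr)/n\le 1/n$, whence $E_n/E_n^*\ge\e^{-1/(2n)}\ge 1-1/(2n)$.

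The principal obstacle is identifying the compact identity for $E_n$ in the first step; once it is in place, everything reduces to standard concavity inequalities for $W$ and elementary bounds on $\log$ and $\exp$. The factor $1-1/(2n)$ is asymptotically sharp because $\Delta(2+\widetilde R)/(1+\widetilde R)\sim 1/n$ for large $n$, matching the claimed bound exactly at leading order.
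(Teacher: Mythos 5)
Your proof is correct, and it takes essentially the same route as the paper: rewrite $E_n$ via the integral identity \eqref{eq:intW} so that $E_n^*$ factors out, then control the remaining pieces by Stirling's approximation and the concavity of $W$. But your version is cleaner and in fact more rigorous on one point. The compact identity $E_n=\exp\bigl(\phi(n+1)-R+r_{n+1}\bigr)/\sqrt{1+R}$ makes the factor $\sqrt{(1+W(n))/(1+W(n+1))}$ visible in $E_n/E_n^*$. In the paper's chain of equalities following \eqref{eq:aux2}, the last line silently replaces $\sqrt{1+R}=\sqrt{1+W(n+1)}$ by $\sqrt{1+W(n)}$ when factoring out $E_n^*$, so that factor is absent from the paper's subsequent bounds. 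This is harmless for the upper bound (the omitted factor is $\le 1$), but the paper's lower-bound argument as written is missing that loss; the stated result is still true because the paper's estimate $\int_n^{n+1}W-W(n+1)\ge -\tfrac12 W'(n)\ge -\tfrac1{2n}$ is not tight, and you show exactly how the slack compensates: using $W'(n)=\widetilde R/\bigl(n(1+\widetilde R)\bigr)$ rather than the cruder $1/n$, and then $\widetilde R(2+\widetilde R)/(1+\widetilde R)^2<1$, you get $E_n/E_n^*\ge\e^{-1/(2n)}$ with the square-root factor correctly included. In short: same strategy, but your bookkeeping is the correct one and would be a worthwhile correction to the paper's write-up.

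One small point of care: your inequality $\sqrt{(1+\widetilde R)/(1+R)}\ge\e^{-\Delta/(2(1+\widetilde R))}$ uses $(1+x)^{-1}\ge\e^{-x}$ with $x=\Delta/(1+\widetilde R)\ge 0$, which is fine; and the trapezoid bound $\int_n^{n+1}W\ge(\widetilde R+R)/2$ is the chord-below-graph inequality for concave $W$, also fine. The verification that $R=W(n+1)\ge W(2)>1/5$ for $n\ge1$ closes the upper bound for all $n\ge1$, matching the lemma's range exactly.
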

\begin{proof}
	Rewriting \eqref{eq:E_n} we obtain
	\begin{align}\nonumber
		E_n&=\frac{n!\exp\({\e^R-(n+1)\ln R}\)}{\e\sqrt{2\pi \e^R\(1+\frac1R\)}}=\frac{n!\exp\(\e^R-(n+1)\(\ln(n+1)-R\)\)}{\e\sqrt{2\pi\frac{n+1}{R}\(1+\frac1R\)}}\\\label{eq:aux2}
		&=\frac{n!\exp\({\int_{0}^{n+1}W(x)dx-(n+1)\[\ln(n+1)-1\]}\)}{\sqrt{2\pi\frac{n+1}{R}\(1+\frac1R\)}}\\\nonumber
		&=\frac{(n+1)!\exp\({\int_{0}^{n+1}W(x)dx}-W(n+1)\)}{\sqrt{2\pi(n+1)}\(\frac {n+1}  \e\)^{n+1}\sqrt{1+R}}\\\nonumber
		&=E_n^*\frac{(n+1)!}{\sqrt{2\pi(n+1)}\(\frac {n+1}  \e\)^{n+1}}\exp\({\int_{n}^{n+1}W(x)dx}-W(n+1)\),
	\end{align}
	Since $W$ is increasing and concave, and the inequality  $W(1)\geq1/2$ due to \eqref{eq:omega},
	the mean value theorem lets  us  bound
	\begin{align*}\int_{n}^{n+1}W(x)dx-W(n+1)&\leq - \int_n^{n+1}(n+1-x)dx\min_{z\in [n,n+1]}W'(z)\leq -\frac12W'(n+1)\\
	&=-\frac12\frac{1}{n+1+\(1+\frac1{W(n+1)}\)}\leq -\frac1{4}\frac1{n+1},
	\end{align*}
	and similarly
	$${\int_{n}^{n+1}W(x)dx}-W(n+1)\geq -\frac12W'(n)\geq -\frac1{2n}.$$
	Using also the well known approximation 
	$$\e^{1/(12n+1)}\leq \frac{n!}{\sqrt{2\pi n}\(\frac {n}  \e\)^{n}}\leq \e^{1/12n},$$
	and the inequality $e^x\geq 1+x$, $x\in\R$, we get
	\begin{align*}
		\frac{E_n}{E_n^*}&\leq \e^{\frac1{n+1}\(\frac1{12}-\frac14\)}\leq 1,\\
		\frac{E_n}{E_n^*}&\geq \e^{-1/(2n)}\geq 1-\frac{1}{2n},
	\end{align*}
	which ends the proof.
\end{proof}

From Theorem \ref{thm:master} we can directly deduce that
\begin{equation}\label{aux1}
	\e^R \left(\frac{B_n}{E_n} - 1 \right) =  - \frac{1-\tfrac{3}{2R}-\tfrac{10}{R^2}-\tfrac{9}{R^3}+\tfrac1{R^4}}{12\(1+\tfrac1R\)^{3}} + O(\e^{-R}).
\end{equation}
Recalling that $\e^R=(n+1)/W(n+1)$ and $W(n+1)\sim \ln n$, by Lemma \ref{lem:EE} we get the asymptotics of the relative error of estimating $B_n$ by $E_n$ or $E_n^*$.
\begin{corollary}\label{cor:errorlim} We have 
$$\lim_{n\rightarrow\infty}\frac{ n}{\ln n}\left(\frac{B_n}{E_n}-1\right)=\lim_{n\rightarrow\infty}\frac{ n}{\ln n}\left(\frac{B_n}{E_n^*}-1\right)=-\frac{1}{12}.$$ 
\end{corollary}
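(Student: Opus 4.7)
The corollary is essentially a clean restatement of what \eqref{aux1} already tells us, so my plan is to chase asymptotics rather than do any new analysis. The key observation is that \eqref{aux1} already isolates the dominant term in $B_n/E_n - 1$; all that remains is to convert the factor $\e^R$ into the natural scale $n/\ln n$ and to transfer the conclusion from $E_n$ to $E_n^*$ via Lemma~\ref{lem:EE}.

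First I would handle the $E_n$ case. Starting from \eqref{aux1} and using $R = W(n+1) \to \infty$, the rational expression
$$\frac{1-\tfrac{3}{2R}-\tfrac{10}{R^2}-\tfrac{9}{R^3}+\tfrac{1}{R^4}}{12(1+\tfrac{1}{R})^{3}}$$
tends to $\tfrac{1}{12}$, so $\e^R(B_n/E_n - 1) \to -\tfrac{1}{12}$. To pass to the desired normalisation I would rewrite
$$\frac{n}{\ln n}\left(\frac{B_n}{E_n}-1\right) \;=\; \frac{n}{\e^R \ln n}\cdot \e^R\!\left(\frac{B_n}{E_n}-1\right),$$
and then use the identity $\e^R = (n+1)/W(n+1)$ from \eqref{eq:eW=x/W} together with the sandwich $\ln n - \ln\ln n \leq W(n+1) \leq \ln(n+1)$ (which follows from \eqref{eq:Wbounds0}) to conclude that $n/(\e^R \ln n) = n W(n+1)/((n+1)\ln n) \to 1$. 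Combining these two limits gives the first equality.

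For the second equality, I would use Lemma~\ref{lem:EE}, which yields $E_n/E_n^* = 1 + O(1/n)$. Writing
$$\frac{B_n}{E_n^*}-1 \;=\; \left(\frac{B_n}{E_n}-1\right)\frac{E_n}{E_n^*} + \left(\frac{E_n}{E_n^*}-1\right),$$
the first term differs from $B_n/E_n - 1$ by a factor tending to $1$, so after multiplying by $n/\ln n$ it still tends to $-1/12$; the second term, being $O(1/n)$, contributes $O(1/\ln n) \to 0$. This gives the second limit.

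There is no real obstacle here — everything is bookkeeping on top of Theorem~\ref{thm:master} and Lemma~\ref{lem:EE}. The only place to be careful is making sure the error term $O(\e^{-R})$ appearing implicitly in \eqref{aux1}, once multiplied by $n/\ln n$, vanishes: this follows from $n \e^{-R}/\ln n = W(n+1)\cdot n/((n+1)\ln n) \to 1$ being bounded (not vanishing), but that error carries an extra factor $1/R$ inside, from the $1/R$ corrections in the rational expression above, so the genuine remainder is $O(1/R) + O(n\e^{-R}/\ln n \cdot \e^{-R}) = o(1)$.
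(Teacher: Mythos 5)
Your proposal is correct and follows essentially the same route as the paper: both start from \eqref{aux1}, convert $\e^R=(n+1)/W(n+1)$ together with $W(n+1)\sim\ln n$ to get $n\e^{-R}/\ln n\to 1$, and then invoke Lemma~\ref{lem:EE} to pass from $E_n$ to $E_n^*$. Your final remark about the remainder is worded a bit awkwardly (the $O(\e^{-R})$ term in \eqref{aux1} does not itself carry a $1/R$ factor; the two error sources are the $O(1/R)$ drift of the rational expression and the $O(\e^{-2R})$ tail, each separately $o(1)$ after multiplying by $n/\ln n$), but the conclusion you draw is right.
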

This shows that the rate of convergence obtained in Proposition \ref{prop:main} and in the next Theorem  is of the optimal order. 
\begin{theorem}\label{prop:main2} For $n\geq2$ it holds 
	\begin{align*}
		\(1-\frac15\frac{\ln n}{n}\)E_n^*\leq B_n\leq E_n^*.
	\end{align*}
	
\end{theorem}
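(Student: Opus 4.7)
For the upper bound $B_n \leq E_n^*$, the quickest route is to chain Proposition \ref{prop:main}, which already gives $B_n \leq E_n$ for $n \geq 311$, with Lemma \ref{lem:EE}, which gives $E_n \leq E_n^*$. Thus $B_n \leq E_n^*$ holds at once for $n \geq 311$, and the remaining indices $n = 2, \ldots, 310$ are handled by direct numerical evaluation of $B_n$ and $E_n^*$.

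For the lower bound, the idea is to combine Theorem \ref{thm:master} with the sequence bound \eqref{eq:q0} and with Lemma \ref{lem:EE}. Writing $R = W(n+1)$, Theorem \ref{thm:master} gives $B_n/E_n \geq q_n - 1.6\, e^{-2R}$, and \eqref{eq:q0} gives $q_n \geq 1 - e^{-R}/12$ whenever $R \geq 5$. Together with $E_n \geq (1 - 1/(2n)) E_n^*$ from Lemma \ref{lem:EE}, this produces
\begin{equation*}
\frac{B_n}{E_n^*} \;\geq\; \left(1 - \frac{1}{2n}\right)\left(1 - \frac{e^{-R}}{12} - 1.6\, e^{-2R}\right).
\end{equation*}
Using $e^{-R} = W(n+1)/(n+1) \leq \ln n / n$ (valid for $n \geq 3$, since $x \mapsto (\ln x)/x$ is decreasing for $x > e$), expanding the product and discarding positive higher-order terms, it suffices to verify
\begin{equation*}
\frac{1}{2n} + \frac{1}{12}\cdot\frac{\ln n}{n} + 1.6 \left(\frac{\ln n}{n}\right)^{\!2} \;\leq\; \frac{1}{5}\cdot\frac{\ln n}{n},
\end{equation*}
or equivalently $\tfrac{1}{2 \ln n} + \tfrac{1}{12} + 1.6 \cdot \tfrac{\ln n}{n} \leq \tfrac{1}{5}$. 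Since $\tfrac15 - \tfrac1{12} = \tfrac{7}{60}$, the two residual terms are comfortably absorbed once $n$ is large enough, and in particular for every $n$ with $W(n+1) \geq 5$.

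The main obstacle is that the analytic chain simultaneously requires $R \geq 5$ (for \eqref{eq:q0}), $n \geq 3$ (to bound $e^{-R}$ by $\ln n/n$), and $\ln n$ large enough that the $\tfrac{1}{2\ln n}$ term fits inside the slack $\tfrac{7}{60}$. All three conditions are met for $n \geq 742$, which leaves the range $n = 2, \ldots, 741$ to be treated by direct computation. This matches the hybrid analytic/numerical scheme used throughout the paper, and is executed by the Python code in the appendix.
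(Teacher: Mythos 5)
Your proof is correct. The upper bound $B_n \leq E_n^*$ is handled exactly as in the paper (Proposition \ref{prop:main} together with Lemma \ref{lem:EE}). For the lower bound the paper routes through Proposition \ref{prop:main}, whose bound $\left|B_n/E_n - 1\right| \leq \tfrac{1}{11}\tfrac{\ln n}{n}$ is already available for all $n \geq 11$ (after that proposition's own numerical verification), so the analytic argument kicks in from $n \geq 311$ and reduces to the elementary inequality $\tfrac{1}{11} + \tfrac{1}{2\ln n} \leq \tfrac15$. You instead go straight to Theorem \ref{thm:master} and the bound \eqref{eq:q0} on $q_n$, which ties the analytic range to the condition $W(n+1) \geq 5$, i.e.\ $n \geq 742$, and leads to the inequality $\tfrac{1}{2\ln n} + \tfrac{1}{12} + 1.6\,\tfrac{\ln n}{n} \leq \tfrac15$; at $n = 742$ the left side is about $0.075 + 0.083 + 0.014 < 7/60$, so it indeed holds with slack. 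The two calculations carry the same content, and your version is if anything slightly more self-contained since it does not lean on Proposition \ref{prop:main} for the lower bound; the only cost is that the numerical verification window grows from $2 \leq n \leq 310$ to $2 \leq n \leq 741$, so the corresponding loop in the appendix code would need to be extended.
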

\begin{proof}
	From Lemma \ref{lem:EE} and previous proposition we conclude for $n\geq311$
	\begin{align*}
		B_n\leq E_n\leq E_n^*.
	\end{align*}
	Next, by Lemma \ref{lem:EE} and Proposition \ref{prop:main} we get  for $n \geq 311$
	\begin{align*}
		\frac{B_n}{E_n^*}-1&=\(\frac{B_n}{E_n}-1\)\frac{E_n}{E_n^*}+\frac{E_n}{E_n^*}-1\geq-\left|\frac{B_n}{E_n}-1\right|-\left|\frac{E_n}{E_n^*}-1\right|\\
		&\geq-\(\frac1{11}\frac{\ln n}{n}+\frac1{2n}\)=-\frac{\ln n}{n}\(\frac1{11}+\frac{1}{2\ln n}\)\\
		&\geq -\frac{\ln n}{n}\(\frac1{11}+\frac{1}{2\ln (311)}\)\geq -\frac1{5}\frac{\ln n}{n}.
	\end{align*}
	For $n = 2, \ldots, 310$ we verify the assertion numerically.
\end{proof}

At the end of this section we provide bounds by means of elementary functions. They do not match the asymptotic forms, however, they give better comprehension of the magnitude of the Bell numbers and seem sufficient for most of the applications. 
\begin{prop}\label{prop:ele} We have
	\begin{align}\label{eq:expl1}
	\(\frac1\e\frac{n}{\ln n}\)^n\leq B_n&
	\leq \(\frac34\frac{n}{\ln n}\)^n,&&n\geq2,\\[5pt]\label{eq:expl2}
	B_n&	\leq \(\frac1\e\frac{n}{\ln n}\big(1+\tfrac{3\ln\ln n}{\ln n}\big)\)^n,&&n\geq6.
	\end{align}

%
\end{prop}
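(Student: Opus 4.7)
The plan is to deduce both bounds from Theorem \ref{prop:main2}, which provides $(1-\tfrac{\ln n}{5n})E_n^*\le B_n\le E_n^*$ for $n\ge 2$. The first step is to rewrite $E_n^*$ in a form suited to extracting elementary estimates. Using $\e^{W(n)}=n/W(n)$ from \eqref{eq:eW=x/W}, together with the identity $nW(n) = n\ln n - n\ln W(n)$, one obtains
\[
E_n^* = \left(\frac{n}{\e\, W(n)}\right)^{n}\cdot\frac{\exp(n/W(n)-1)}{\sqrt{1+W(n)}}.
\]
This representation isolates the main $(n/(\e W(n)))^n$ factor together with a multiplicative correction that is easy to control. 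All three bounds in the proposition then follow by comparing this form against the targets, via monotonicity in $n$ plus numerical verification for the initial indices.

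For the lower bound in \eqref{eq:expl1}, after dividing by $(n/(\e\ln n))^n$ the inequality reduces to
\[
\left(1-\tfrac{\ln n}{5n}\right)\cdot\left(\tfrac{\ln n}{W(n)}\right)^{n}\cdot\frac{\exp(n/W(n)-1)}{\sqrt{1+W(n)}}\;\ge\;1.
\]
The bound $W(n)\le \ln n$ from \eqref{eq:Wbounds0} makes the first power at least $1$, while $\exp(n/W(n))\ge \exp(n/\ln n)$ grows much faster than the polynomial corrections $\sqrt{1+W(n)}$ and $(1-\tfrac{\ln n}{5n})^{-1}$. A crude monotonicity argument produces an explicit threshold $n_0$ above which the inequality is automatic, and the finitely many small cases $2\le n<n_0$ are checked by direct computation.

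For the upper bound $B_n\le (3n/(4\ln n))^n$ in \eqref{eq:expl1}, taking $n$-th roots and rearranging reduces the claim to
\[
\ln\!\left(\tfrac{3\e\, W(n)}{4\ln n}\right) \;\ge\; \tfrac{1}{W(n)} - \tfrac{1}{n} - \tfrac{\ln(1+W(n))}{2n}.
\]
The key input is the sharper lower bound $W(n)\ge \ln n-\ln\ln n$ from \eqref{eq:Wbounds0}, which gives $\ln(W(n)/\ln n)\ge \ln(1-\ln\ln n/\ln n)$. Since $\ln(3\e/4)>0$ is a fixed positive constant while the correction $\ln(1-\ln\ln n/\ln n)$ and the whole right-hand side tend to $0$, the left-hand side dominates for large $n$, and monotonicity reduces the check to an explicit cutoff past which the inequality is automatic; the remaining initial cases are verified numerically.

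I expect the refined bound \eqref{eq:expl2} to be the main obstacle, since the target factor $1+3\ln\ln n/\ln n$ is itself $1+o(1)$, so the estimate $W(n)\ge \ln n-\ln\ln n$ must be used essentially sharply and balanced carefully against the exponential correction. After the same manipulation as before, the claim reduces to
\[
\ln\!\left(1+\tfrac{3\ln\ln n}{\ln n}\right) + \ln\!\left(\tfrac{W(n)}{\ln n}\right) \;\ge\; \tfrac{1}{W(n)} - \tfrac{1}{n} - \tfrac{\ln(1+W(n))}{2n}.
\]
Applying $\ln(1-x)\ge -x-x^2$ and $\ln(1+x)\ge x-x^2/2$ together with $W(n)\ge \ln n-\ln\ln n$, one lower bounds the left-hand side by roughly $\tfrac{2\ln\ln n}{\ln n}$ up to controlled second-order terms, which comfortably dominates the $O(1/\ln n)$ right-hand side. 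The factor $3$ in the statement (rather than, say, $2$) provides just enough slack to absorb the loss from $W(n)<\ln n$; monotonicity then yields an explicit cutoff, and the threshold $n\ge 6$ emerges from numerical verification of the initial indices, reflecting that the bound fails for $n\le 5$.
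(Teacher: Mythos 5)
Your proposal is correct and takes essentially the same approach as the paper: rewrite $E_n^*$ via $\e^{W(n)}=n/W(n)$ and $W(n)=\ln n-\ln W(n)$ into the target form $(n/(\e\ln n))^n$ times a controlled correction, bound that correction using $W(n)\leq\ln n$ and $W(n)\geq\ln n-\ln\ln n$, invoke Theorem \ref{prop:main2} for the lower bound, and verify the initial indices numerically. The paper's writeup uses the form $E_n^*=\bigl(\frac{n}{\e\ln n}\bigr)^n\exp[n(\cdot)]$ and bounds the factor $\frac1{\e}\e^{1/W(n)}(1+\ln W(n)/W(n))$ directly by monotonicity in $W(n)$, whereas your factorization $E_n^*=\bigl(\frac{n}{\e W(n)}\bigr)^n\exp(n/W(n)-1)/\sqrt{1+W(n)}$ and subsequent logarithmic comparison is an equivalent reorganization of the same computation; one small imprecision is your claim that $W(n)\leq\ln n$ handles the first power of the lower bound — that inequality requires $n\geq\e$, so $n=2$ must be deferred to the numerical check (as the paper also does).
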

\begin{proof} Since  $\e^{W(n)}=n/W(n)$, we write
	\begin{align}\nonumber
		E_n^*&=\frac{\exp\({\frac n{W(n)}+n(\ln n-\ln W(n))-(n+1)}\)}{ \sqrt{1+W(n)} }\\\label{eq:aux14}
		&=\(\frac{n}{\e\ln n}\)^n{\exp\[n\({\frac1{W(n)}+\ln\(\tfrac{W(n)+\ln W(n)}{ W(n)}\)-\frac1n-\frac1{2n}\ln\big(1+W(n)\big)}\)\]}\\\label{eq:aux15}
			&\leq\[\frac{n}{\e\ln n}\e^{1/W(n)}\(1+\tfrac{\ln W(n)}{ W(n)}\)\]^n.
	\end{align}
	In order to get the upper bound in \eqref{eq:expl1} we estimate for $W(n)\geq \e$ (equivalently $n\geq \lceil \e\e^{\e}\rceil =42$)
	\begin{align*}
		\frac1{\e}\e^{1/{W(n)}}\(1+\tfrac{\ln W(n)}{ W(n)}\)\leq \frac{\e^{1/\e}}\e\(1+\frac1{\e}\)\leq\frac34,
	\end{align*}
	as required. Furthermore, if $W(n)\geq5$ (equivalently $n\geq 743$ ), by \eqref{eq:eW=x/W} and \eqref{eq:Wbounds0}  we have
	\begin{align*}
	\e^{1/{W(n)}}\(1+\tfrac{\ln W(n)}{ W(n)}\)&=1+\(\e^{1/{W(n)}}-1\)+\e^{1/{W(n)}}\frac{\ln W(n)}{ W(n)}\\
	&\leq 1+\e^{1/5}\(\frac1{W(n)}+\frac{\ln W(n)}{ W(n)}\)\\
	&= 1+\e^{1/5}\frac{\ln W(n)}{\ln n}\frac{W(n)+\ln W(n)}{W(n)}\(\frac1{\ln W(n)}+1\)\\
	&\leq  1+\e^{1/5}\frac{\ln \ln n}{\ln n}\(1+\frac{\ln 5}{5}\)\(\frac1{\ln 5}+1\)\\
	&\leq  1+3\,\frac{\ln \ln n}{\ln n}.
	\end{align*}
	Applying this to \eqref{eq:aux15} we get \eqref{eq:expl2}.

We turn our attention to the first inequality in \eqref{eq:expl1}.
	Using \eqref{eq:aux14} and the inequality  $1+x\leq \e^x, x\in\R$, we get for $W(n)\geq1$ (i.e. $n\geq3$)
	\begin{align*}
	E_n^*\geq\[\frac{n}{\e\ln n}{\e^{\frac1{W(n)}-\frac1n-\frac1{2n}W(n)}}\]^n.
	\end{align*}
	Consequently,  Proposition \ref{prop:main2} and the inequality $ 1-x\geq \e^{-2x}$, $x\in [0, 1/2]$, give us
	\begin{align*}B_n\geq \(1-\frac15 \frac{\ln n}{n}\)E_n^*\geq \[\frac{n}{\e\ln n}\exp\({\frac1{W(n)}-\frac1n-\frac{W(n)}{2n}-\frac25\frac{\ln n}{n^2}}\)\]^n.
	\end{align*}
	Furthermore, by $W(x) \leq \ln(x)$ for $x \geq \e$, we get for $n\geq 3$
	\begin{align*}
		&{\frac1{W(n)}-\frac1n-\frac{W(n)}{2n}-\frac25\frac{\ln n}{n^2}}\\
		&=\frac1{W(n)}\(1-\frac{W(n)}n-\frac{(W(n))^2}{2n}-\frac25\frac{W(n)\ln n}{n^2}\)\\
		&\geq \frac1{\ln n}\(1-\frac{\ln n}n-\frac{(\ln n)^2}{2n}-\frac25\(\frac{ n\ln n}{n}\)^2\)\\
		&\geq \frac1{\ln n}\(1-\max_{x\geq1}\frac{\ln x}x-\frac12\max_{y\geq1}\frac{(\ln y)^2}{y}-\frac25\(\max_{z\geq1}\frac{\ln z}z\)^2\)\\
		&\geq \frac1{\ln n}\(1-\frac1{\e}-\frac12\frac{4}{\e^2}-\frac25\frac1{\e^2}\)\\
		&= \frac1{\ln n}\(1-\frac{5\e+12}{5\e^2}\)\geq 0,
	\end{align*}
	which proves the lower bound in the assertion for $n \geq 3$. Numerical verification of the upper bound  in \eqref{eq:expl1} for $2 \leq n \leq 42$, the  lower one for $n = 2$ and the bound \eqref{eq:expl2} for $n\leq742$ ends the proof.
\end{proof}

\begin{remark}
One can see from the proof that considering $n$ large enough, the constant $3/4$ might be replaces by any other greater than $\frac1{\e}$. Nevertheless, $3/4$ is very close to the optimal, when considering all $n\geq2$, which might be observed for $n=10$. Indeed, it holds $B_{10}=115975$ while $(0.739\cdot10/\ln 10)^{10}\approx 115954$.

The bound \eqref{eq:expl2} shows that in \eqref{eq:expl1} the lower one is more optimal.  Namely, one can deduce that 
$$\frac{B_n^{1/n}}{\frac1\e\frac{n}{\ln n}}\stackrel{n\rightarrow\infty}{\longrightarrow}1,$$
which, without convergence rates, has been already observed in \cite{BT}.
\end{remark}

%
%
%
%

\section{Ratios of subsequent Bell numbers}

In order to find the asymptotics of the ratio $B_{n}/B_{n-1}$ we start with analogous problem for the sequence $E_n$ approximating the Bell numbers.
\begin{lemma}\label{lem:E/E} For $n\geq1$ we have
$$-\frac1{W(n)}\leq \frac{E_{n}}{E_{n-1}}-\e^{W(n)}\leq 0.$$
\end{lemma}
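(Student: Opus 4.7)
The plan is to exploit the clean representation
$$E_n = \alpha_n\,\frac{\exp\!\left(\int_0^{n+1} W(x)\,dx - R_n\right)}{\sqrt{1+R_n}}, \qquad R_n := W(n+1),$$
which appears explicitly inside the computation in the proof of Lemma~\ref{lem:EE}; here $\alpha_n := (n+1)!/\bigl[\sqrt{2\pi(n+1)}\,((n+1)/\e)^{n+1}\bigr]$ is the standard Stirling correction, obeying $\alpha_n\in[\e^{1/(12n+13)},\e^{1/(12n+12)}]$ by classical Stirling bounds. Forming the quotient at $n$ and $n-1$ and dividing by $\e^{W(n)}=\e^{R_{n-1}}$, the two integrals collapse onto $[n,n+1]$ and yield
$$\frac{E_n/E_{n-1}}{\e^{W(n)}} = \frac{\alpha_n}{\alpha_{n-1}}\sqrt{\frac{R_{n-1}+1}{R_n+1}}\exp\!\left(\int_n^{n+1}[W(x)-R_n]\,dx\right).$$

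Each factor on the right is at most $1$: the Stirling ratio because $k!/\bigl(\sqrt{2\pi k}(k/\e)^k\bigr)$ is decreasing in $k$; the square root because $R_{n-1}\le R_n$ ($W$ is increasing); and the exponential because $W(x)\le R_n$ on $[n,n+1]$. This immediately gives the upper bound $E_n/E_{n-1}\le \e^{W(n)}$.

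For the lower bound, set $\delta_n := W(n)-\log(E_n/E_{n-1}) \ge 0$. Since $\e^{-\delta_n}\ge 1-\delta_n$ and $\e^{W(n)}=n/W(n)$ by \eqref{eq:eW=x/W}, the estimate $\delta_n\le 1/n$ will deliver
$$\frac{E_n}{E_{n-1}} \ge \e^{W(n)}(1-1/n) = \e^{W(n)} - \frac{1}{W(n)}.$$
Now $\delta_n$ is a sum of three non-negative terms, one per factor. The Stirling contribution is at most $\tfrac{1}{12n}-\tfrac{1}{12n+13}\le \tfrac{13}{144n^2}$. For the square-root contribution, $\log(1+x)\le x$ together with the concavity estimate $R_n-R_{n-1}\le W'(n)=R_{n-1}/[n(R_{n-1}+1)]$, obtained from \eqref{eq:derW} and $\e^{W(n)}=n/W(n)$, yield a bound of $R_{n-1}/[2n(R_{n-1}+1)^2]\le 1/(8n)$ via the elementary inequality $x/(x+1)^2\le 1/4$. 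For the integral contribution, the concavity of $W$ gives $R_n-W(x)\le W'(n)(n+1-x)$ on $[n,n+1]$, whence the term is at most $W'(n)/2\le R_{n-1}/[2n(R_{n-1}+1)]\le 1/(2n)$. Summing,
$$\delta_n \le \frac{13}{144n^2}+\frac{1}{8n}+\frac{1}{2n} = \frac{1}{n}\!\left(\frac{5}{8}+\frac{13}{144n}\right) \le \frac{103}{144n} < \frac{1}{n},$$
uniformly for $n\ge 1$, completing the proof.

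The main obstacle is keeping the sum of the three contributions below $1/n$ for \emph{every} $n\ge 1$, not just asymptotically, since each is individually of order $1/n$ with constants that could in principle combine to exceed $1$. What makes this work is the sharpness of the Stirling bounds together with the monotonicity of $W'$, which lets the single quantity $W'(n)$ control both the square-root and integral terms, combined with the elementary inequalities $R_{n-1}/(R_{n-1}+1)\le 1$ and $R_{n-1}/(R_{n-1}+1)^2\le 1/4$.
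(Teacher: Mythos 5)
Your proof is correct, and it rests on the same representation of $E_n$ in terms of $\int_0^{n+1}W(x)\,dx$ that the paper derives inside Lemma~\ref{lem:EE}. The substantive difference is the bookkeeping: the paper cancels the factorials exactly ($n!/(n-1)!=n$), which leaves a term $-(n+1)\ln(1+1/n)+1$ in the exponent and a square-root factor $\sqrt{(1+1/W(n))/(1+1/W(n+1))}\geq 1$ that must be compensated by the exponential undershooting $1$ sufficiently; forcing this compensation is what drives the paper to assume $n\geq 6$ (so that $W(n)\geq\sqrt2$) and to verify $n\leq 5$ numerically. You instead pull out the Stirling-normalized constant $\alpha_n$, so the ratio $(E_n/E_{n-1})/\e^{W(n)}$ factors into three pieces each individually $\leq 1$ (decreasing Stirling ratio, $W$ increasing, $W(x)\leq R_n$ on $[n,n+1]$): the upper bound is then immediate, and the lower bound reduces to a single clean estimate $\delta_n\leq 1/n$, obtained by bounding the three $-\log$-contributions separately, with the key observation that $W'(n)=R_{n-1}/[n(R_{n-1}+1)]$ controls both the square-root and integral terms and that the resulting constants $\frac{5}{8}+\frac{13}{144n}$ stay below $1$ uniformly in $n\geq1$. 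This buys you a proof valid for all $n\geq1$ without numerical verification, at the modest cost of invoking the two-sided Stirling bound explicitly rather than bypassing it by exact cancellation.
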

\begin{proof}
For $n\leq5$ we verify the assertion numerically. Assume therefore $n\geq6$. From \eqref{eq:aux2} we have
\begin{align*}
E_n=\frac{n!\exp\({\int_{0}^{n+1}W(x)dx-(n+1)\[\ln(n+1)-1\]-\tfrac12W(n+1)}\)}{\sqrt{2\pi\(1+\frac1{W(n+1)}\)}}.
\end{align*}
This implies
\begin{align*}
&\frac{E_{n}}{E_{n-1}}-\e^{W(n)}\\
&=\e^{W(n)}\[\frac{\exp\({\int_{n}^{n+1}W(x)dx-\tfrac12\[W(n+1)+W(n)\]-(n+1)\ln(1+\frac1{n})+1}\)}{\sqrt{\({1+\frac1{W(n)}}\)^{-1}\({1+\frac1{W(n+1)}}\)}}-1\].
\end{align*}
Monotonicity of $W$ and the inequality \eqref{eq:aux7} give us
\begin{align*}
1\leq \sqrt{\frac{1+\frac1{W(n)}}{1+\frac1{W(n+1)}}}&=\sqrt{1+\frac{{W(n+1)}-{W(n)}}{W(n)W(n+1)\(1+\frac1{W(n+1)}\)}}\\
&\leq \sqrt{1+\frac{{1}}{n(W(n))^2}}\leq {1+\frac{{1}}{2n(W(n))^2}}.
\end{align*}
Furthermore, by the mean value theorem and monotonicity of $W$ and $W'$, 
\begin{align*}
0\leq &\int_{n}^{n+1}W(x)dx-\tfrac12\[W(n+1)+W(n)\]\\
&=\int_{n}^{n+1/2}W(x)-W(n)dx+\int_{n+1/2}^{n+1}W(x)-W(n+1)dx\\
&\leq \frac14W'(n)-\frac14W'(n+1)\leq \frac14\max_{x\in[n,n+1]}W''(x)\\
&=\frac14\max_{x\in[n,n+1]}\frac{1+\frac{1}{x+W(x)}}{(x+W(x))^2}\leq\frac{1+\frac1{n}}{4n^2}\leq\frac12\frac{1}{n^2}.
\end{align*}
Using this and the inequalities $x\geq \ln(x+1)\geq x-\tfrac12x^2$, we  get
\begin{align*}
\e^{-1/n}&\leq \exp\({\int_{n}^{n+1}W(x)dx-\tfrac12\[W(n+1)+W(n)\]-(n+1)\ln(1+\frac1{n})+1}\)\\
&\leq \exp\(\frac12\frac{1}{n^2}-\frac{1}{n}+\frac{n+1}{2n^2}\)= \exp\(-\frac1{2n}\Big(1-\frac2{n}\Big)\)\\
&\leq \e^{-1/3n}\leq 1-\frac1{3n}+\frac{1}{18n^2}\leq 1-\frac{1}{4n},
\end{align*}
where we took advantage of the assumption $n\geq6$.
Consequently, we obtain
\begin{align*}
\frac{E_{n}}{E_{n-1}}-\e^{W(n)}&\geq \e^{W(n)}\(\e^{-1/n}-1\)\geq- \e^{W(n)}\frac1{n}=-\frac1{W(n)}
\end{align*}
and since $W(n)\geq W(6)\geq \sqrt{2}$,
\begin{align*}
\frac{E_{n}}{E_{n-1}}-\e^{W(n)}&\leq \e^{W(n)}\[\(1+\frac{{1}}{2n(W(n))^2}\)\(1-\frac{1}{4n}\)-1\]\\
&=-\frac{1}{W(n)}\[\frac14-\frac1{2(W(n))^2}+\frac{1}{8n(W(n))^2}\]\leq0,
\end{align*}
which ends the proof.
\end{proof}

We are now in position to formulate and prove the main result of this section.
\begin{theorem}\label{thm:B/B} For $n\geq1$ it holds 
$$\left|\frac{B_{n}}{B_{n-1}}-\e^{W(n)}\right|\leq  \frac87\frac{1}{W(n)}.$$ 
\end{theorem}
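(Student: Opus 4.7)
The natural starting point is the telescoping split
$$\frac{B_n}{B_{n-1}}-\e^{W(n)}=\underbrace{\left(\frac{B_n}{B_{n-1}}-\frac{E_n}{E_{n-1}}\right)}_{(\mathrm{I})}+\underbrace{\left(\frac{E_n}{E_{n-1}}-\e^{W(n)}\right)}_{(\mathrm{II})}.$$
Term $(\mathrm{II})$ is handled directly by Lemma \ref{lem:E/E}, contributing at most $1/W(n)$. This already accounts for the leading order $1/W(n)$ of the target bound, leaving a slack of $\tfrac{8}{7}-1=\tfrac{1}{7}$ (in units of $1/W(n)$) to absorb $(\mathrm{I})$.

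For $(\mathrm{I})$ the key algebraic identity is
$$(\mathrm{I})=\frac{E_n}{E_{n-1}}\cdot\frac{B_n/E_n-B_{n-1}/E_{n-1}}{B_{n-1}/E_{n-1}},$$
which reduces the task to comparing the relative errors of $B_n$ and $B_{n-1}$ against their asymptotic forms. Theorem \ref{thm:master} writes $B_k/E_k=q_k+\theta_k$ with $|\theta_k|\le 1.6\,\e^{-2W(k+1)}$, so the numerator splits as $(q_n-q_{n-1})+(\theta_n-\theta_{n-1})$. Inequality \eqref{eq:q2}, applied at index $n-1$, bounds $|q_n-q_{n-1}|\le \e^{-W(n)}/(10n)$, and the triangle inequality yields $|\theta_n-\theta_{n-1}|\le 3.2\,\e^{-2W(n)}$. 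In the prefactor, Lemma \ref{lem:E/E} gives $E_n/E_{n-1}\le \e^{W(n)}=n/W(n)$, while \eqref{eq:q0} combined with Theorem \ref{thm:master} gives $B_{n-1}/E_{n-1}\ge 1-\e^{-W(n)}/12-1.6\,\e^{-2W(n)}$, which is $\ge 1-O(\e^{-5})$ for $W(n)\ge 5$.

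Multiplying everything out and using the identity $\e^{W(n)}/n=1/W(n)$, one obtains a bound of the form
$$|(\mathrm{I})|\le \frac{1}{W(n)}\cdot\frac{\tfrac{1}{10}+3.2\,W(n)\e^{-W(n)}}{1-\e^{-W(n)}/12-1.6\,\e^{-2W(n)}}.$$
Combined with $(\mathrm{II})$, this gives an estimate of the shape $\tfrac{1}{W(n)}+\tfrac{1.1\ldots}{W(n)}\cdot(1+o(1))+3.21\,\e^{-W(n)}$, which must be pressed below $8/(7W(n))$. Since the denominator is near $1$ and $W(n)\e^{-W(n)}$ is decreasing, the required inequality $W(n)\e^{-W(n)}\le c$ for a small explicit $c$ will determine a threshold, expected to be in the range $W(n)\ge 6$--$6.5$.

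The main obstacle is bookkeeping of constants: the slack $1/7$ is tight enough that the exponential error term $3.21\,\e^{-W(n)}$ is not negligible compared with $1/W(n)$ for moderate $W(n)$, so the analytic argument only closes for $W(n)$ above some threshold noticeably larger than the paper's usual cutoff of $5$. The remaining values (an enlarged but still finite range of $n$, up to a few thousand) are then to be checked directly on a computer, following the same numerical-verification strategy used elsewhere in the paper. No new analytical machinery should be needed beyond Theorem \ref{thm:master}, Lemma \ref{lem:E/E}, and the sequence properties \eqref{eq:q0}--\eqref{eq:q2}.
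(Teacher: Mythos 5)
Your plan is essentially the paper's proof, reorganized. The paper inserts the auxiliary pivot $\frac{E_n q_n}{E_{n-1}q_{n-1}}$ into a three-term triangle inequality so that the variation of $q_n$ and the variation of $\theta_n:=B_n/E_n-q_n$ are estimated in separate pieces, each with its own prefactor; you merge these into the single term $(\mathrm{I})$ with prefactor $\frac{E_n/E_{n-1}}{B_{n-1}/E_{n-1}}$. The ingredients (Lemma \ref{lem:E/E}, Theorem \ref{thm:master}, and \eqref{eq:q0}--\eqref{eq:q2}) and the final arithmetic are the same; the paper's closing estimate $4\e^{-W(n)}+\tfrac1{9n}+\tfrac1{W(n)}\le\tfrac87\tfrac1{W(n)}$ corresponds exactly to your $(\mathrm{I})+(\mathrm{II})$.

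There is, however, a computational slip in your displayed bound on $(\mathrm{I})$. Multiplying $E_n/E_{n-1}\le\e^{W(n)}$ against $|q_n-q_{n-1}|\le\e^{-W(n)}/(10n)$ gives $\tfrac1{10n}=\tfrac1{10\,W(n)\e^{W(n)}}=\tfrac1{W(n)}\cdot\tfrac{\e^{-W(n)}}{10}$, so the $\tfrac1{10}$ in your display should read $\tfrac{\e^{-W(n)}}{10}$. With the corrected term, at $W(n)=5$ your bound on $|(\mathrm{I})|$ evaluates to roughly
$\bigl(\tfrac{\e^{-5}}{50}+3.2\,\e^{-5}\bigr)\big/\bigl(1-\tfrac{\e^{-5}}{12}-1.6\,\e^{-10}\bigr)\approx 0.0217$,
already below the available slack $\tfrac1{7W(n)}=\tfrac1{35}\approx 0.0286$, and the ratio is decreasing in $W(n)$. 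Thus the analytic argument closes at the paper's usual cutoff $W(n)\ge 5$ (equivalently $n\ge 743$), with numerical verification only for $n\le 742$; your prediction of a threshold near $W(n)\approx 6$--$6.5$ and an enlarged numerical range is an artifact of the slipped $\tfrac1{10}$.
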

\begin{proof}
By triangle inequality we have
\begin{align}\label{eq:bb0}
\left|\frac{B_{n}}{B_{n-1}}-\e^{W(n)}\right|\leq \left|\frac{B_{n}}{B_{n-1}}-\frac{E_{n}q_{n}}{E_{n-1}q_{n-1}}\right|+\left|\frac{E_{n}q_{n}}{E_{n-1}q_{n-1}}-\frac{E_{n}}{E_{n-1}}\right|+\left|\frac{E_{n}}{E_{n-1}}-\e^{W(n)}\right|.
\end{align}
The last term is already bounded in Lemma \ref{lem:E/E}. 
Let us deal with the other two. First, we rewrite
\begin{align*}
\left|\frac{B_{n}}{B_{n-1}}-\frac{E_{n}q_{n}}{E_{n-1}q_{n-1}}\right|&= \left|\frac{B_{n}-E_{n}q_{n}}{B_{n-1}}+E_{n}q_{n}\(\frac{E_{n-1}q_{n-1}-B_{n-1}}{B_{n-1}E_{n-1}q_{n-1}}\)\right|\\
&= \left|\frac{E_{n}}{E_{n-1}}\frac{\frac{B_{n}}{E_{n}}-q_{n}}{\frac{B_{n-1}}{E_{n-1}}-1+1}+\frac{E_{n}}{E_{n-1}}\frac{q_{n}}{q_{n-1}}\(\frac{q_{n-1}-\frac{B_{n-1}}{E_{n-1}}}{\frac{B_{n-1}}{E_{n-1}}-1+1}\)\right|\\
&\leq \frac{\frac{E_{n}}{E_{n-1}}}{1-|\frac{B_{n-1}}{E_{n-1}}-1|} \[\left|{\frac{B_{n}}{E_{n}}-q_{n}}\right|+\(1+\frac{|q_{n}-q_{n-1}|}{q_{n-1}}\)\left|{q_{n-1}-\frac{B_{n-1}}{E_{n-1}}}\right|\].
\end{align*}
Next, for $n\geq \lceil5\e^5\rceil=743$ we apply Lemma \ref{lem:E/E}, Proposition \ref{prop:main}, the inequalities \eqref{eq:q0}, \eqref{eq:q2} and Theorem \ref{thm:master}, which leads to 
\begin{align}\nonumber
\left|\frac{B_{n}}{B_{n-1}}-\frac{E_{n}q_{n}}{E_{n-1}q_{n-1}}\right|&\leq \frac{\e^{W(n)}}{1-\tfrac1{11}\e^{-W(n)}}\(2+\frac{\e^{-W(n)}}{10n(1-\e^{-W(n)}/12)}\)1.6\e^{-2W(n)}\\\label{eq:bb1}
&\leq \frac{1}{1-\tfrac1{11}\e^{-5}}\(2+\frac{\e^{-5}}{10\cdot5\e^5(1-\e^{-5}/12)}\)1.6\e^{-W(n)}\leq 4 \e^{-W(n)}.
\end{align}
We bound the second term of the right-hand side in \eqref{eq:bb0}  using Lemma \ref{lem:E/E} and the inequalities  \eqref{eq:q0} and \eqref{eq:q2} as follows
\begin{align}\nonumber
\left|\frac{E_{n}q_{n}}{E_{n-1}q_{n-1}}-\frac{E_{n}}{E_{n-1}}\right|&=\frac{E_{n}}{E_{n-1}}\frac{1}{q_{n-1}}\left|q_{n}-q_{n-1}\right|\leq  \frac{1}{1-\e^{-W(n)}}\frac{1}{10n}\\\label{eq:bb2}
&\leq  \frac{1}{1-\e^{-5}}\frac{1}{10n}\leq\frac1{9n}.
\end{align}
Thus, applying \eqref{eq:bb1}, \eqref{eq:bb2} and Lemma \ref{lem:E/E} to \eqref{eq:bb0} we arrive at
\begin{align*}
\left|\frac{B_{n}}{B_{n-1}}-\e^{W(n)}\right|&\leq 4\e^{-W(n)}+\frac1{9n}+\frac{1}{W(n)}\\
&=\frac{1}{W(n)}\(\frac{4W(n)}{\e^{W(n)}}+\frac1{9\e^{W(n)}}+1\)=\frac{1}{W(n)}\(\frac{20}{\e^{5}}+\frac1{9\e^5}+1\)\leq \frac87\frac{1}{W(n)}.
\end{align*}
This completes the proof for $n\geq743$. The rest has been  verified  numerically.
\end{proof}

	\section* {Acknowledgements}
The authors would like to thank Rados{\l}aw Serafin for numerical assistance during initial research.

\section{Appendix} 
This section contains a code in Python of a program that complements several proofs in this article by confirming the assertions for some initial indices. \\

\begin{python}
	import math
	from scipy.special import lambertw
	from decimal import Decimal #To handle really big numbers
	
	#Definition of Lambert W function for simplicity
	def W(n):
		return lambertw(n).real

	#Definition of Bell numbers
	bell_numbers_dict = {}
	bell_numbers_dict[0] = 1
	
	def bell_number(n):
		if n in bell_numbers_dict.keys():
			return bell_numbers_dict[n]
		result = 0
		for i in range(n):
			result += math.comb(n-1, i) * bell_number(i)
		bell_numbers_dict[n] = result
		return result

	#Definition of approximations E_n and E*_n
	def E(n):
		R = Decimal(W(n+1))
		numerator = math.factorial(n) * (R.exp()-1).exp()
		denominator = R**n * (2 * Decimal(math.pi) * (n+1) * (R+1)).sqrt()
		return numerator/denominator

	def E_star(n):
		numerator = (Decimal(W(n)).exp() + Decimal(n*W(n)) - (n+1)).exp()
		denominator = Decimal(math.sqrt(1 + W(n)))
		return numerator/denominator
	
	##############################################
	####### Calculations for THEOREM 4.1 #########
	##############################################
	
	def bounds_4_1(n):
		'''Function returns a tupple
		(lower bound, upper bound)
		for B_n proposed by theorem 4.1'''
		R = W(n+1)
		outer = 1.6*math.exp(-2*R)
		inner = 1 - math.exp(-R)/12 * (1 - 3/(2*R) -
		10/R**2 - 9/R**3 + 1/R**4)/(1 + 1/R)**3
	
		#Theorem 4.1 states that |Bn/En - inner| <= outer,
		#or equivalently En*(inner-outer) <= Bn <= En*(inner+outer)
		
		lower_bound = E(n) * Decimal(inner - outer)
		upper_bound = E(n) * Decimal(inner + outer)
		return lower_bound, upper_bound

	print(" ")
	print("THEOREM 4.1")
	print( "n ln(lower bound) ln(B_n) ln(upper bound) OK/WRONG")
	print(" ")
	
	for n in range(1, 743):
		ln_bell = math.log(bell_number(n))
		lower, upper = bounds_4_1(n)
		ln_lower, ln_upper = lower.ln(), upper.ln()
		if ln_bell < ln_lower or ln_bell > ln_upper:
			flag = "WRONG"
		else:
			flag = "OK"
		print(f"{n}, {ln_lower:.6f}, {ln_bell:.6f}, {ln_upper:.6f}, {flag}")
	
	##################################################
	####### Calculations for PROPOSITION 4.3 #########
	##################################################
	
	#Calculations are split into two parts.
	#Part 1 refers to bound |Bn/En - 1| <= exp(-W(n+1))/11
	#Part 2 refers to bound Bn <= En
	
	def bounds_4_3(n):
		'''Function returns a tupple
		(lower bound, upper bound)
		for B_n stated by proposition 4.3'''
		
		#Prop 4.3, stating that |Bn/En - 1| <= exp(-W(n+1))/11
		#is equivalent to En*(1-exp(-W(n+1))/11)<=Bn<=En*(1+exp(-W(n+1))/11)
		
		lower_bound = E(n) * Decimal(1-math.exp(-W(n+1))/11)
		upper_bound = E(n) * Decimal(1+math.exp(-W(n+1))/11)
		return lower_bound, upper_bound
	
	print(" ")
	print("PROPOSITION 4.3 part 1")
	print( "n ln(lower bound) ln(B_n) ln(upper bound) OK/WRONG")
	print(" ")
	
	for n in range(1, 743):
		ln_bell = math.log(bell_number(n))
		lower, upper = bounds_4_3(n)
		ln_lower, ln_upper = lower.ln(), upper.ln()
		if ln_bell < ln_lower or ln_bell > ln_upper:
			flag = "WRONG"
		else:
			flag = "OK"
		print(f"{n}, {ln_lower:.6f}, {ln_bell:.6f}, {ln_upper:.6f}, {flag}")

	print(" ")
	print("PROPOSITION 4.3 part 2")
	print( "n ln(B_n) ln(E_n) OK/WRONG")
	print(" ")
	
	for n in range(1, 743):
		ln_bell = math.log(bell_number(n))
		ln_E = E(n).ln()
		if ln_bell > ln_E:
			flag = "WRONG"
		else:
			flag = "OK"
		print(f"{n},  {ln_bell:.6f},  {ln_E:.6f},  {flag}")
	
	##############################################
	####### Calculations for Theorem 4.6 #########
	##############################################
	
	def bounds_4_6(n):
		'''Function returns a tupple
		(lower bound, upper bound)
		for B_n proposed by theorem 4.6'''
		lower_bound = Decimal(1 - 1/5 * math.log(n)/n) * E_star(n)
		upper_bound = E_star(n)
		return lower_bound, upper_bound
	
	print(" ")
	print("Theorem 4.6")
	print( "n ln(lower bound) ln(B_n) ln(upper bound) OK/WRONG")
	print(" ")
	
	for n in range(1, 311):
		ln_bell = math.log(bell_number(n))
		lower, upper = bounds_4_6(n)
		ln_lower, ln_upper = lower.ln(), upper.ln()
		if ln_bell < ln_lower or ln_bell > ln_upper:
			flag = "WRONG"
		else:
			flag = "OK"
		print(f"{n}, {ln_lower:.6f}, {ln_bell:.6f}, {ln_upper:.6f}, {flag}")

	##############################################
	###### Calculations for Proposition 4.7 ######
	##############################################
	
	#We split calculations into two parts,
	#corresponding to inequalities (4.3) and (4.4) respectively
	
	print(" ")
	print("Proposition 4.7 first part")
	print( "n ln(lower bound) ln(B_n) ln(upper bound) OK/WRONG")
	print(" ")
	
	for n in range(2, 43):
		ln_bell = math.log(bell_number(n))
		ln_lower = n*math.log(1/math.e * n/math.log(n))
		ln_upper = n*math.log(3/4 * n/math.log(n))
		if ln_bell < ln_lower or ln_bell > ln_upper:
			flag = "WRONG"
		else:
			flag = "OK"
		print(f"{n}, {ln_lower:.6f}, {ln_bell:.6f}, {ln_upper}, {flag}")

	print(" ")
	print("Theorem 4.7 second part")
	print( "n ln(B_n) ln(upper bound) OK/WRONG")
	print(" ")
	
	for n in range(3, 743):
		ln_bell = math.log(bell_number(n))
		ln_upper = n*math.log(n/(math.e*math.log(n))*(1 +	3*math.log(math.log(n))/math.log(n)))
		if ln_bell > ln_upper:
			flag = "WRONG"
		else:
			flag = "OK"
		print(f"{n},  {ln_bell:.6f},  {ln_upper:.6f},  {flag}")

	##############################################
	######## Calculations for Lemma 5.1 ##########
	##############################################
	
	print(" ")
	print("Lemma 5.1")
	print( "n lower bound middle upper bound OK/WRONG")
	print(" ")
	
	for n in range(1, 6):
		upper = 0
		middle = E(n+1)/E(n) - Decimal(W(n+1)).exp()
		lower = Decimal(-1/W(n))
		if middle < lower or middle > upper:
			flag = "WRONG"
		else:
			flag = "OK"
		print(f"{n}, {lower:.6f}, {middle:.6f}, {upper:.6f}, {flag}")

	##############################################
	####### Calculations for Theorem 5.2 #########
	##############################################
	
	def bounds_5_2(n):
		'''Function returns a tupple
		(lower bound, upper bound)
		for B_n proposed by theorem 5.2'''
		
		#Theorem 5.2, stating that |B_n/B_{n-1} - e^{W(n)}| <= 8/7 1/W(n)
		#is equivalent to
		#e^{W(n)} - 8/7 1/W(n) <= B_n/B_{n-1} <= e^{W(n)} + 8/7 1/W(n)
		
		lower_bound = math.exp(W(n)) - 8/7 * 1/W(n)
		upper_bound = math.exp(W(n)) + 8/7 * 1/W(n)
		return lower_bound, upper_bound
	
	print(" ")
	print("Theorem 5.2")
	print( "n lower bound B_n/B_{n-1} upper bound OK/WRONG")
	print(" ")
	
	for n in range(1, 743):
		bell_ratio = bell_number(n)/bell_number(n-1)
		lower, upper = bounds_5_2(n)
		if bell_ratio < lower or bell_ratio > upper:
			flag = "WRONG"
		else:
			flag = "OK"
		print(f"{n}, {lower:.6f}, {bell_ratio:.6f}, {upper:.6f}, {flag}")
\end{python}

 \bibliography{bibliography}    
\bibliographystyle{plain}

\end{document}